\newtheorem{theorem}{Theorem}
\newtheorem{proposition}[theorem]{Proposition}
\newtheorem{corollary}[theorem]{Corollary}
{\theorembodyfont{\rmfamily}%
  \newtheorem{example}[theorem]{Example}
   }
\newenvironment{proof}{\noindent\textit{Proof.}}
{\QED\vskip\theorempostskipamount} 
\def\petitcarre{\vrule height4pt width 4pt depth0pt}
\def\QED{\relax\ifmmode\eqno{\hbox{\petitcarre}}\else{%
  \unskip\nobreak\hfil\penalty50\hskip2em\hbox{}\nobreak\hfil
  \petitcarre
  \parfillskip=0pt \finalhyphendemerits=0\par\smallskip}
  \fi}
\newcommand\cL{\mathcal{L}}
\newcommand{\cP}{\mathcal{P}}
\newcommand{\N}{\mathbb{N}}
\newcommand{\Z}{\mathbb{Z}}
\newcommand{\CC}{\mathbb{C}}
\def\un(#1){\underline{#1}\,}
\DeclareMathOperator{\Card}{Card}
\definecolor{ivoire}{rgb}{0.99,0.99,0.8}
\definecolor{light-gray}{gray}{0.7}
\newcommand{\resp}{{resp.}\xspace}
\newcounter{hours}\newcounter{minutes}
\numberwithin{theorem}{section}
\numberwithin{equation}{section}
\numberwithin{figure}{section}
\numberwithin{table}{section}
\definecolor{lime}{HTML}{A6CE39}
\DeclareRobustCommand{\orcidicon}{%
	\begin{tikzpicture}
	\draw[lime, fill=lime] (0,0)
	circle [radius=0.16]
	node[white] {{\fontfamily{qag}\selectfont \tiny ID}};
	\draw[white, fill=white] (-0.0625,0.095)
	circle [radius=0.007];
	\end{tikzpicture}
	\hspace{-2mm}
}
\xdef\csname orcid\x\endcsname{\noexpand%
 \href{https://orcid.org/\csname orcidauthor\x\endcsname}{\noexpand\orcidicon}}
\title{A note on one-sided recognizable morphisms}
\author{Marie-Pierre B\'eal\orcidA{},
  Val\'erie Berth\'e$^1$,
  Dominique Perrin$^2$\\ and Antonio Restivo$^3$\\
  $^1$ IRIF, 
$^2$ LIGM, Universit\'e Gustave Eiffel, $^3$ Universita di Palermo}
\begin{document}

\maketitle
\begin{abstract}
  We revisit the notion of one-sided recognizability of morphisms
  and its relation to two-sided recognizability.
\end{abstract}
\tableofcontents
\section{Introduction}

The notion of recognizability for morphisms is an important
one with a long history (see~\cite{Kyriakoglou2019} for an account of it).

The first attempts used a one-sided notion adapted to one-sided
infinite sequences. The main progress realized with Moss\'e's Theorem
(see Theorem~\ref{theoremMosse} below) was made possible by turning to a two-sided version
of recognizability. Since then, several generalizations of
Moss\'e's theorem have been obtained
(see \cite{BezuglyiKwiatkowskiMedynets2009}, \cite{BertheSteinerThuswaldnerYassawi2019} and \cite{BealPerrinRestivo2021}).

In this note, we come back to the one-sided version of recognizability.
It was studied in
\cite{CrabbDuncanMcGregor2010} for alphabets with
two letters and more recently in~\cite{AkiyamaTanYuasa2017}. The authors of \cite{CrabbDuncanMcGregor2010}
prove that although a primitive aperiodic endomorphism on two letters is not always
one-sided recognizable, it is almost so, in the sense
that the one-sided sequences for
which this fails have a special form. We generalize their result to arbitrary
primitive aperiodic endomorphisms.

More precisely, given a shift space $X$ on the alphabet $A$,
we define the recognizability of a morphism
$\sigma\colon A^*\to B^*$ on $X$ by a uniqueness desubstitution
property and we relate this definition with
the original definition of Moss\'e (Proposition~\ref{propositionTwoDefsRec}).
We state without proof the theorem of Moss\'e asserting that
a primitive aperiodic morphism $\sigma$ is recognizable
on the shift $X(\sigma)$ defined by $\sigma$ (Theorem~\ref{theoremMosse}).

We next define the one-sided
recognizablity of a morphism $\sigma$ on a one-sided shift.
We first relate   the notion of (two-sided) recognizability
on a shift $X$ with the one-sided recognizability
on the one-sided shift $X^+$ associated to $X$
(Proposition~\ref{propositionRightMarked}). Next, we
relate it with the original definition
of Moss\'e (Proposition~\ref{proposition12Mosse}).
The main results are
\begin{enumerate}
\item Theorem~\ref{theoremCharactOneSided} which characterizes the
endomorphisms $\sigma$ which are not one-sided recognizable on $X(\sigma)^+$.
\item Theorem~\ref{theoremAlmostRec} which states that every endomorphism $\sigma$
is almost one-sided recognizable on the shift $X(\sigma)$, in the sense that it
is one-sided recognizable except at a finite number of points.
\end{enumerate}

We end the paper with a mention of the point which has motivated us
for this note and concerns continuous eigenvalues of shift spaces.
Indeed, a result of Host~\cite{Host1986} on the eigenvalues
of substitution shifts is formulated using one-sided shifts and  one-sided
recognizability (and was
recently extended to $\cal S$-adic systems in~\cite{BertheCecchiYassawi2022}).
Its proof can be however be read without change using two-sided
shifts (and two-sided recognizability). We  contribute to
the clarification of the situation, giving a simple proof that 
 a recurrent shift space and  its associated one-sided
 shift have the same spectrum (Proposition~\ref{propositionSpectra12},
 see also \cite[Proposition 2.1]{BertheCecchiYassawi2022}).

\section{Shift spaces}
Let $A$ be a finite alphabet. We denote by $A^*$ the set of words on $A$,
by $\varepsilon$ the empty word, and by $A^+$ the set of nonempty words.

We consider the set $A^\Z$ of two-sided sequences of elements of $A$
and the corresponding set $A^\N$ of one-sided sequences.
For $x\in A^\Z$ and $i\le j$, we denote $x_{[i,j]}=x_{i}x_{i+1}\cdots x_j$
and $x_{[i,j)}=x_{i}x_{i+1}\cdots x_{j-1}$.

For a nonempty word $w\in A^+$, we denote $w^\omega=www\cdots$
and $w^\infty=\cdots ww\cdot www\cdots$ (where the index $0$ is at
the beginning of $w$).

We denote by $S$ the \emph{shift transformation} defined for
$x\in A^\Z$ (\resp $x\in A^\N$)
by $S(x)=y$ if $y_n=x_{n+1}$ for $n\in\Z$ (\resp $n\in \N$).
A subset $X$ of $A^\Z$ (\resp $A^\N$) is \emph{shift invariant} if
$S(X)=X$.

The \emph{orbit} of a point $x\in A^\Z$
is the set of all $S^n(x)$ for $n\in \Z$.

The set $A^\Z$ of two-sided infinite sequences of elements of
$A$ is a compact metric space for the distance $d(x,y)=2^{-r(x,y)}$
with
\begin{displaymath}
  r(x,y)=\max\{n\ge 0\mid x_{[-n,n]}=y_{[-n,n]}\}.
  \end{displaymath}
Similarly, the set $A^\N$ of one-sided sequences of elements of $A$
is a compact metric space for the distance defined using
$r(x,y)=\max\{n\ge 0\mid x_{[0,n]}=y_{[0,n]}\}$.

A \emph{shift space} (\resp one-sided shift space)
on the alphabet $A$
is a closed and shift invariant subset of $A^\Z$ (\resp $A^\N$).
The set $A^\Z$ (\resp $A^\N$) itself is a a shift space
(\resp a one-sided shift space) called the \emph{full shift}
(\resp the full one-sided shift).

The shift space \emph{generated} by a sequence $x\in A^\Z$
(\resp $x\in A^\N$) is the topological closure of the
set $\cup_{n\in \Z}S^nx$ (\resp $\cup_{n\in \N}S^nx$). It is
the smallest shift space (\resp one-sided shift space)
containing $x$.

A shift space is a particular case of a (topological) \emph{dynamical system},
which is by definition a pair $(X,T)$ of a compact metric space $X$
and a continuous map $T$ from $X$ into itself.
It is \emph{invertible} if $T$ is invertible (and thus a homeomorphism).

A morphism from a dynamical system $(X,T)$ to a dynamical system
$(X',T')$ is a continuous map $\varphi\colon X\to X'$ which interleaves
with $T,T'$, that is, such that $T'\circ\varphi=\varphi\circ T$.

Given an invertible system $(X,T)$, the \emph{orbit} of $x\in X$ is
the set $\{T^n(x)\mid n\in \Z\}$. Its \emph{forward orbit}
is the set $\{T^n(x)\mid n\ge 0\}$.

The \emph{language} of a shift space
(\resp a one-sided shift space)  $X$, denoted $\cL(X)$
is the set of factors of the sequences in $X$.
We denote by $\cL_n(X)$ the set of words of length $n$
in $\cL(X)$.

Let $X,Y$ be shift spaces on alphabets $A,B$ respectively.
Given an integer $N$,
a \emph{block map} of \emph{window size} $N$
is a map $f\colon \cL_{2N+1}(X)\to B$. The \emph{sliding block code}
defined by $f$ is the map $\varphi:X\to B^\Z$ defined by
$\varphi(x)=y$ if
\begin{displaymath}
  y_n=f(x_{[n-N,n+N]})\quad (n\in\Z)
\end{displaymath}
By a classical result, a map $\varphi\colon X\to Y$ is a morphism
if and only if it is a sliding block code from $X$ into $Y$
\cite[Theorem 6.2.9]{LindMarcus1995}.

For a two-sided sequence $x\in A^\Z$, we denote $x^+=x_0x_1\cdots$.
If $X$ is a shift space, we denote by $X^+$ the set of
$x^+$ for $x\in X$. It is a one-sided shift space.
Note that $X$ is determined
by $X^+$ since for every shift space $X$, one has the equality
\begin{displaymath}
  X=\{x\in A^\Z\mid x_nx_{n+1}\cdots\in X^+,\mbox{for all $n\in\Z$}\}.
\end{displaymath}
Thus, the map $X\mapsto X^+$ is a bijection from the family
of shift spaces on $A$ onto the family of one-sided shift
spaces on $A$.

A sequence $x\in A^\Z$ (\resp $x\in A^\N$) is \emph{periodic}
if $S^n(x)=x$ for some $n\ge 1$. Otherwise, it is
\emph{aperiodic}. A shift space (\resp a one-sided shift space) is
periodic if all its elements are periodic. It is
aperiodic if all its elements  are aperiodic.

A topological dynamical system is \emph{recurrent} if
there is a point  with a dense forward orbit.

A nonempty topological dynamical system is \emph{minimal}
if, for every closed subset $Y$ of $X$ such
that $T(Y)\subset Y$, one has $Y=\emptyset$ or $Y=X$.
Equivalently, $X$ is minimal if and only if
the orbit  of every point $x$
is dense.

A shift space $X$ is \emph{recurrent}
if for every $u,v\in \cL(X)$ there is a word $w$ such that $uwv\in\cL(X)$.

A shift space $X$ is
\emph{uniformly recurrent}
if for every $w\in\cL(X)$ there is an $n\ge 1$
such that $w$ is a factor of every word in $\cL_n(X)$.

Given a word $u\in\cL(X)$, a \emph{right returm word}
 to $u$
is a nonempty word $w$ such that $uw\in\cL(X)$  and
that $uw$ has exactly two occurrences of $u$,
one as a prefix and one as a suffix.

Similarly, a \emph{left return word} to $u$ is a nonempty
word $w$ such that $wu\in\cL(X)$  and
that $wu$ has exactly two occurrences of $u$,
one as a prefix and one as a suffix.

A shift space is uniformly recurrent if and only if
it is recurrent and
for every $u\in \cL(X)$ the set of return words to $u$ is finite.

The following is well known (see~\cite{DurandPerrin2021} for example).

\begin{proposition}
A shift space is minimal if and only if it is uniformly recurrent.
\end{proposition}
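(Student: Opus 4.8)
The plan is to deduce the statement from the equivalence recorded in the excerpt, that a (nonempty) shift space is minimal if and only if the orbit of each of its points is dense; I assume $X\neq\emptyset$ throughout, the empty shift being excluded by the convention that the systems under consideration are nonempty. Both implications run through one dictionary between words and open sets: for $w\in\cL(X)$ and $a\in\Z$ put $[w,a]:=\{z\in X\mid z_{[a,a+|w|-1]}=w\}$. Each such set is open in $X$ (it is $X$ intersected with an open cylinder of $A^\Z$) and nonempty (as $w$ occurs in some element of $X$, a suitable shift of which lies in $[w,a]$), the sets $[w,a]$ form a basis of the topology of $X$ (the metric balls of $X$ are of this shape), and $w$ occurs in $x\in X$ exactly when $S^j(x)\in[w,0]$ for some $j\in\Z$.

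Suppose first that $X$ is uniformly recurrent. Fix $x\in X$ and a nonempty open $U\subseteq X$; by the basis property we may assume $U=[w,a]$ for some $w\in\cL(X)$ and $a\in\Z$. Applying uniform recurrence to $w$ yields $n\ge 1$ such that $w$ is a factor of every word of $\cL_n(X)$; in particular $w$ is a factor of $x_{[0,n-1]}$, so $x_{[j,j+|w|-1]}=w$ for some $j$, and then $S^{j-a}(x)\in[w,a]=U$. Thus the orbit of $x$ meets every nonempty open subset of $X$, hence is dense; as $x$ was arbitrary, $X$ is minimal.

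Conversely, suppose $X$ is minimal and fix $w\in\cL(X)$. Since every orbit is dense, each $x\in X$ satisfies $S^i(x)\in[w,0]$ for some $i\in\Z$, so the open sets $V_i:=\{x\in X\mid S^i(x)\in[w,0]\}$, $i\in\Z$, cover $X$. By compactness of $X$, finitely many of them suffice, say those with $|i|\le N$. Set $n:=2N+|w|$ and take any $u\in\cL_n(X)$; using shift invariance, write $u=x_{[-N,\,N+|w|-1]}$ for a suitable $x\in X$. Then $S^i(x)\in[w,0]$ for some $i$ with $|i|\le N$, i.e.\ $x_{[i,i+|w|-1]}=w$, and since $-N\le i$ and $i+|w|-1\le N+|w|-1$ this occurrence of $w$ falls inside the window $[-N,N+|w|-1]$; hence $w$ is a factor of $u$. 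Therefore $X$ is uniformly recurrent, the integer $n$ witnessing the condition for $w$.

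The combinatorial and topological bookkeeping (cylinder sets form a basis, are nonempty, shift invariance lets one re-center an occurrence) is routine. The one step carrying real content is the compactness argument in the converse: $X$ is covered not by cylinders but by their preimages under all powers $S^i$, $i\in\Z$, and it is precisely the extraction of a finite subcover that upgrades "every orbit is dense" to the \emph{uniform} bound $N$, and hence to a single length $n=2N+|w|$ that works for every word of $\cL(X)$ of that length.
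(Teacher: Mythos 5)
Your proof is correct. The paper states this proposition without proof (it is cited as well known, with a reference to the literature), so there is no in-paper argument to compare against; what you give is the standard one. The direction from uniform recurrence to minimality is a direct unwinding of definitions through cylinder sets, and the converse is the classical compactness argument: the open cover of $X$ by the sets $S^{-i}([w,0])$, $i\in\Z$, admits a finite subcover, which converts ``every orbit visits $[w,0]$'' into the uniform bound $N$ and hence the single length $n=2N+|w|$. The bookkeeping (openness of the $V_i$ because $S$ is a homeomorphism of $X$, the re-centering of an arbitrary $u\in\cL_n(X)$ as $x_{[-N,N+|w|-1]}$, and the check that the occurrence of $w$ lands inside that window) all verifies, and you rightly flag the nonemptiness convention needed so that the empty shift does not spoil the ``only if'' direction.
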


Let $X$ be a shift space on $A$. For $w\in \cL(X)$, we denote
\begin{displaymath}
  \ell(w)=\Card\{a\in A\mid aw\in\cL(X)\},\quad r(w)=\Card\{a\in A\mid wa\in\cL(X)\}.
  \end{displaymath}
A word $w\in\cL(X)$ is \emph{left-special} (\resp \emph{right-special})
if $\ell(w)\ge 2$
(\resp $r(w)\ge 2$).

Let $X$ be a one-sided shift. For a one-sided sequence $x\in X$,
we denote $\ell(x)=\Card\{a\in A\mid ax\in X\}$.
A one-sided sequence $x\in A^\N$ 
is \emph{left-special} if $\ell(x)\ge 2$.


Two points $x,y$ of a two-sided
shift space $X$ are \emph{right asymptotic} if there is an $n\ge 0$
such that $S^n(x)^+=S^n(y)^+$. They are \emph{asymptotically equivalent}
if there are $n,m\in \Z$ such that $S^n(x)^+=S^m(y)^+$.
The classes of this equivalence are called the \emph{asymptotic classes}.
Every class is a union of orbits.
An asymptotic class is non-trivial if it is not reduced to one orbit.

The \emph{complexity} of a shift space $X$ is the sequence
$p_n(X)=\Card(\cL_n(X))$. We denote $s_n(X)=p_{n+1}(X)-p_n(X)$.
It is classical that
\begin{equation}
  s_n(X)=\sum_{w\in \cL_n(X)}(\ell(w)-1)=\sum_{w\in\cL_n(X)}(r(w)-1).\label{eqsn}
\end{equation}
Indeed, one has
\begin{eqnarray*}
  s_n(X)&=&p_{n+1}(X)-p_n(X)=\Card(\cL_{n+1}(X))-\Card(\cL_{n}(X))\\
  &=&\sum_{w\in\cL_{n}(X)}(\ell(w)-1).
\end{eqnarray*}
If the sequence $s_n(X)$ is bounded, the complexity $p_n(X)$
is at most linear, that is $p_n(X)\le kn$ for some $k\ge 1$.
The converse is true by an important result due to Cassaigne
\cite{Cassaigne1996}.
\begin{proposition}\label{propositionCassaigne}
  If the complexity of a shift $X$ is at most linear,
  then $s_n(X)$ is bounded.
  \end{proposition}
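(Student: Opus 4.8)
The plan is to prove the statement by the method of bilateral extensions, following Cassaigne; we argue by contradiction, assuming $p_n(X)\le kn$ for all $n$ while $s_n(X)$ is unbounded. For $v\in\cL_n(X)$ put $e(v)=\Card\{(a,b)\in A^2\mid avb\in\cL(X)\}$ and $m(v)=e(v)-\ell(v)-r(v)+1$, and call $v$ \emph{bispecial} if $\ell(v)\ge 2$ and $r(v)\ge 2$. If $\ell(v)=1$ then every occurrence of $v$ is preceded by the same letter, so $e(v)=r(v)$ and $m(v)=0$; dually if $r(v)=1$; hence $m(v)=0$ unless $v$ is bispecial. Repeating the double counting used for \eqref{eqsn}, but writing each word of $\cL_{n+1}(X)$ as $av$ with $|v|=n$, gives the identity
\begin{displaymath}
  s_{n+1}(X)-s_n(X)=\sum_{v\in\cL_n(X)}m(v).
\end{displaymath}
Encode the bilateral extensions of $v$ by the bipartite graph $H_v$ whose vertex set is the set of left extensions together with the set of right extensions of $v$, with an edge for each $avb\in\cL(X)$; it has no isolated vertex, so $m(v)=\mu(H_v)-(\mathrm{cc}(H_v)-1)$, where $\mu$ is the cyclomatic number and $\mathrm{cc}$ the number of connected components. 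In particular $m(v)<0$ forces $H_v$ disconnected ($v$ \emph{weak}) and $m(v)>0$ forces a cycle in $H_v$ ($v$ \emph{strong}).

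Next I reduce the statement to a counting estimate. Since $\mathrm{cc}(H_v)\le\min(\ell(v),r(v))$, the negative part of $s_{n+1}-s_n$ is at most $\sum_{v\text{ bispecial},\,|v|=n}(\ell(v)-1)\le s_n$, with a dual bound for the positive part. Moreover $s_n$ is bounded if and only if the number $R_n$ of right-special factors of length $n$ is bounded (as $R_n\le s_n\le(\Card A-1)R_n$), and, ordered by the suffix relation, the right-special factors form a forest whose branchings and leaves occur only at bispecial factors: a right-special $v$ dies precisely when $H_v$ has exactly $r(v)$ components, which forces $\ell(v)\ge r(v)$ and $m(v)=1-r(v)$, so is a weak situation. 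Thus it is enough to bound the total ``loss'' in this forest in terms of $p_N$, equivalently to show that the single value $s_{n_0}$ propagates: the $R_{n_0}\ge s_{n_0}/(\Card A-1)$ right-special factors of length $n_0$ occupy disjoint sets of factors having them as prefixes, so that controlling how much of the forest survives and keeps branching over the lengths $[n_0,n_0+\Omega(n_0)]$ would force $p_m$ to grow with $s_{n_0}$, contradicting $p_m\le km$ once $s_{n_0}$ is large enough.

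The main obstacle is making this quantitative. The bounds above are not by themselves sufficient, because $s_{n_0}\le p_{n_0+1}$ is anyway only $O(n_0)$ and the cumulative negative variation $\sum_{k\le N}(s_{k+1}-s_k)^{-}$ is only seen to be $O(p_N)=O(N)$, so no contradiction appears at a single scale $n_0$. Cassaigne's device is a finer, self-improving analysis of the weak bispecial factors: each death in the right-special forest is charged, through the component structure of the graphs $H_v$, to a \emph{special} factor rather than to an arbitrary one, which bounds the total number of deaths up to length $N$ tightly enough to turn a provisional estimate $s_n\le B(n)$ into a strictly better one and, iterating, into $s_n=O(1)$. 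Establishing this charging/bootstrapping is the heart of the proof; granted it, the contradiction with linear complexity, and hence the boundedness of $s_n(X)$, is immediate.
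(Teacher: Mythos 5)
The paper does not actually prove this proposition: it states it as a known theorem of Cassaigne and cites \cite{Cassaigne1996}, so there is no in-paper argument to compare yours with, and your attempt has to stand on its own as a proof of Cassaigne's theorem. The preparatory material you give is correct and standard: the bilateral multiplicity $m(v)=e(v)-\ell(v)-r(v)+1$, the fact that $m(v)=0$ unless $v$ is bispecial, the second-difference identity $s_{n+1}(X)-s_n(X)=\sum_{v\in\cL_n(X)}m(v)$, and the reading of $m(v)$ off the extension graph $H_v$ as cyclomatic number minus (number of components minus one). But none of this yields the conclusion. The only quantitative bounds you extract (the negative part of $s_{n+1}-s_n$ is at most $s_n$, the positive part at most $(\Card A-1)s_n$) give at best $s_{n+1}\le \Card(A)\,s_n$, which is compatible with exponential growth; and summing the telescoping identity up to $N$ only controls the cumulative variation by $O(p_N)=O(N)$, which, as you yourself observe, produces no contradiction with $p_n\le kn$ at any single scale.

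The genuine gap is precisely the step you label ``the heart of the proof'' and then assume: a charging argument bounding the total positive contribution $\sum_{n\le N}\sum_{v}m(v)^{+}$ of strong bispecial factors independently of $N$ (equivalently, a fine analysis of how branches of the forest of right-special factors are created and destroyed, pairing strong bispecial factors against weak ones). Writing ``granted it, the contradiction is immediate'' does not discharge this step; as written, the linear-complexity hypothesis is never used in a way that forces boundedness, so the proposal is a correct setup plus an acknowledgment that the main lemma is missing, not a proof. Either supply Cassaigne's bootstrapping lemma in full, or do what the paper does and simply cite \cite{Cassaigne1996}.
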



A shift space $X$ is \emph{linearly recurrent} if there is a constant
$K$ such that for every $w\in\cL(X)$, the length of every
return word to $w$ is bounded by $K|w|$.

The following result is from \cite{Durand&Host&Skau:1999}.
\begin{proposition}\label{propositionLRisLinear}
Every linearly recurrent shift has at most linear complexity.
\end{proposition}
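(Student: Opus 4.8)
Proof proposal for Proposition~\ref{propositionLRisLinear} (every linearly recurrent shift has at most linear complexity).

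The plan is to bound $p_n(X)$ by counting, for each length $n$, the occurrences of a fixed long-enough word inside the words of $\cL(X)$. Fix the linear-recurrence constant $K$, so that for every $w \in \cL(X)$, every return word to $w$ has length at most $K|w|$. First I would observe that a shift space satisfying this condition is recurrent (take any $u,v$; embed both in a common long word using return words), hence its language has no "dead ends", and also that linear recurrence forces $X$ to have bounded gaps between consecutive occurrences of any word, which is exactly uniform recurrence. The key quantitative consequence is: if $w \in \cL(X)$ has length $|w| \geq N$ for a threshold $N$ to be chosen, then any two consecutive occurrences of a fixed reference word $u_0$ in $w$ are at distance at most $K|u_0|$, so $w$ contains at least $(|w| - |u_0|)/(K|u_0|)$ occurrences of $u_0$.

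Next I would fix a reference word $u_0 \in \cL(X)$ of minimal length that is not a factor of some word of length, say, $2K|u_0|$ — more cleanly, I would instead fix an arbitrary word $u_0$ of length $1$ (a letter), noting that by uniform recurrence $u_0$ reoccurs within bounded gap $R := K$. Then every word $w \in \cL_n(X)$ with $n$ large is determined by: (i) the prefix of $w$ up to the first occurrence of $u_0$ (at most $R$ choices of length, and $p_R(X)$ choices of content), and (ii) the sequence of return words to $u_0$ that tile the rest of $w$. The set $\RR$ of return words to $u_0$ is finite (lengths bounded by $R$, so $|\RR| \leq p_{R+1}(X) =: c$), and $w$ uses at most $\lceil n/1 \rceil$ of them but actually the number of return words needed to cover length $n$ is at most $n$; however the number of \emph{distinct tilings} is what we must control. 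The cleaner route: a word of length $n$ decomposes as (bounded prefix)(concatenation of return words)(bounded suffix), and since each return word has length $\geq 1$, at most $n$ return words appear, but the decomposition as a sequence is not what bounds the count. So instead I would use the standard argument: $p_n(X) \le p_R(X) \cdot \Card\{\text{ways to write } n' \le n \text{ as ordered sum of elements of } \RR\text{-lengths}\}$, which is not obviously linear — this is the main obstacle.

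To get genuine linearity I would therefore switch to the Durand–Host–Skau argument via \emph{return words of a fixed word}, bootstrapped. The clean statement is: in a linearly recurrent shift, for every $w$ the number of distinct return words to $w$ is bounded by a constant $C = C(K)$ independent of $w$ (this is Lemma in \cite{Durand&Host&Skau:1999}: if $u,v$ are return words to $w$ and $|v| \le |u|$ then $v$ is a prefix of a bounded power, and a pigeonhole on lengths in the window $[|w|, K|w|]$ together with the fact that two return words of the same length must be equal gives $\Card(\RR_w) \le K$). Granting $\Card(\RR_w) \le K$: pick any $w$ of length $n$; every word of $\cL_{2n}(X)$ contains an occurrence of $w$ (uniform recurrence with gap $\le K n$, so length $2n \ge (K+1)n$ suffices once we also use that the gap bound gives an occurrence within the first $Kn$ symbols — adjust the constant), and a word of length $2n$ is determined by the position of the first occurrence of $w$ (at most $Kn$ choices), the word $w$ itself (fixed), and — crucially — is \emph{not} free beyond that. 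The actual DHS count: $p_{n}(X) \le K \cdot p_{\lceil n/??\rceil}$ recursively, or more directly they show $s_n(X) \le $ const by bounding the number of right-special words of length $n$ and their degree using return words. I would follow their Proposition: the number of left-special factors of length $n$ is at most $\Card(\RR_w)$ for a suitable $w$ of length comparable to $n$, hence bounded by $K$, and each has $\ell(w) \le \Card(\text{alphabet}) =: d$, so by \eqref{eqsn}, $s_n(X) = \sum_{w \in \cL_n(X)}(\ell(w)-1) \le K(d-1)$, a constant; summing, $p_n(X) \le p_1(X) + K(d-1)n$, which is at most linear. The main obstacle, and the step I would spend the most care on, is proving the uniform bound $\Card(\RR_w) \le K$ on the number of return words — this is where linear recurrence is used in an essential, non-obvious way, via comparing overlapping return words and a pigeonhole on their lengths.
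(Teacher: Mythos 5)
The paper itself gives no proof of this proposition; it simply cites Durand--Host--Skau, so your attempt has to be judged on its own merits, and as written it has genuine gaps. The central one is the claimed uniform bound $\Card(\RR_w)\le K$ on the number of return words, which you base on the assertion that ``two return words of the same length must be equal.'' That assertion is only true for return words of length at most $|w|$ (such a return word is forced to be a suffix of $w$); for longer return words it fails --- nothing prevents $ba$ and $ca$ from both being return words to the single letter $a$. The actual Durand--Host--Skau bound is $K(K+1)^2$, not $K$, and its proof needs the \emph{lower} bound $|v|>|w|/K$ on return words, which in turn requires aperiodicity (via $(K+1)$-power-freeness) --- none of which appears in your sketch. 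The second gap is the step ``the number of left-special factors of length $n$ is at most $\Card(\RR_w)$ for a suitable $w$ of length comparable to $n$'': this is asserted with no argument, and it is not a standard lemma in that form; without it your bound on $s_n(X)$ via \eqref{eqsn} does not follow. (A smaller issue: recurrence does not follow from the bare definition of linear recurrence given in the paper --- for a word occurring only once the return-word condition is vacuous --- so it must be taken as part of the hypothesis, as is conventional.)

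What is frustrating is that you state the one fact that makes the proof immediate and then walk past it. Linear recurrence with constant $K$ means that in any point of $X$ the occurrences of a word $u$ of length $n$ are syndetic with gaps at most $Kn$; hence \emph{every} word of $\cL(X)$ of length $(K+1)n$ contains \emph{every} word of length $n$ as a factor. Fixing a single $v\in\cL_{(K+1)n}(X)$, every element of $\cL_n(X)$ is one of the at most $(K+1)n-n+1=Kn+1$ factors of $v$ of length $n$, so $p_n(X)\le Kn+1$. No return-word counting, no special-factor counting, and no appeal to Proposition~\ref{propositionCassaigne} is needed. I recommend replacing the entire second half of your argument with this three-line count.
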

Note that this implies that,
in a linearly recurrent shift, the sequence $s_n(X)$
is bounded and thus, by \eqref{eqsn}, the number of left-special
words of length $n$ is bounded, and finally that the number of
left-special
one-sided infinite sequences is finite. In this case, every one-sided
sequence in $X^+$ but a countable number of them, has a unique left
extension in $X$. In particular, the shifts $X$ and $X^+$
are measurably isomorphic (see~\cite{DurandPerrin2021}).
\section{Morphisms and recognizability}
Let $\sigma\colon A^*\to B^*$ be a morphism.
Then $\sigma$ extends to a map from $A^\Z$ (\resp $A^\N)$
to $A^*\cup A^\Z$ (\resp $A^*\cup A^\N$).

Let $\sigma\colon A^*\to B^*$ be a morphism.
A letter $a\in A$ is \emph{erasable} if $\sigma^n(a)=\varepsilon$
for some $n\ge 1$. The morphism is \emph{non-erasing}
if there is no erasable letter.

Let $\sigma\colon A^*\to B^*$ be a morphism.
A \emph{$\sigma$-representation} of a point $y\in B^\Z$
 is a pair
$(x,k)$ with $x\in A^\Z$ 
and $0\le k<|\sigma(x_0)|$ such that
$y=S^k(\sigma(x))$.

Let $X$ be a shift space on $A$.
A morphism $\sigma\colon A^*\to B^*$ is \emph{recognizable}
on $X$  at $y\in B^\Z$
if $y$ has at most one $\sigma$-representation $(x,k)$
with $x\in X$.
It is recognizable on $X$
 if it is recognizable
 on $X$ at every $y\in B^\Z$.

 For a shift space $X$ on $A$ and a word $w$ of length $n$, we denote
 $[w]_X=\{x\in X\mid x_{[0,n)}=w\}$.

 \begin{proposition}\label{propositionPartition}
   Let  $\sigma\colon A^*\to B^*$ be a morphism, let $X$
   be a shift space on $A$ and let $Y$ be the closure under the
   shift of $\sigma(X)$. The morphism $\sigma$
   is recognizable on $X$ if and only if  the family $\cP$
of sets $S^k\sigma([a]_X)$ for $a\in\cL_1(X)$ and $0\le k<|\sigma(a)|$
   forms a partition of $Y$.
 \end{proposition}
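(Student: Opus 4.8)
I would prove the two implications separately, using the definition of recognizability as a uniqueness-of-$\sigma$-representation property and the description of $Y$ as the shift-closure of $\sigma(X)$.

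First, I would clarify what the sets in the family $\cP$ are and why $Y$ is their union. A point $y\in B^\Z$ lies in some $S^k\sigma([a]_X)$ precisely when $y$ admits a $\sigma$-representation $(x,k)$ with $x\in X$ and $x_0=a$; indeed, $y\in S^k\sigma([a]_X)$ means $y=S^k(\sigma(x))$ for some $x\in X$ with $x_0=a$, and then $0\le k<|\sigma(x_0)|=|\sigma(a)|$ is exactly the range over which $k$ is allowed to vary. So $\bigcup\cP = \{y\in B^\Z : y \text{ has a } \sigma\text{-representation in } X\}$. I would then argue this union equals $Y$: every $\sigma(x)$ with $x\in X$ and every shift $S^k\sigma(x)$ clearly has a $\sigma$-representation, and conversely, since each $S^k\sigma([a]_X)$ is a continuous image of a compact set (hence compact, hence closed) and $Y$ is by definition the smallest shift space containing $\sigma(X)$, a short verification (the family is finite, each member is compact, their union is shift-invariant because applying $S$ to $S^k\sigma([a]_X)$ either increases $k$ within the same block or moves to the representation shifted by one letter of $x$) shows $\bigcup\cP$ is a shift space containing $\sigma(X)$ and contained in $Y$, hence equal to $Y$. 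This step — checking that the finite union $\bigcup\cP$ is closed and shift-invariant, so that it is exactly $Y$ rather than merely dense in it — is where I expect the main bookkeeping, since one must handle the boundary case $k=|\sigma(a)|-1$ correctly, where $S$ carries the representation to the next letter $x_1$.

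Granting $\bigcup\cP=Y$, the equivalence becomes: $\sigma$ is recognizable on $X$ iff the sets of $\cP$ are pairwise disjoint. If $\sigma$ is recognizable on $X$, then any $y$ has at most one $\sigma$-representation $(x,k)$ with $x\in X$; in particular $y$ cannot lie in two distinct members $S^k\sigma([a]_X)$ and $S^{k'}\sigma([a']_X)$, because membership in each would supply $\sigma$-representations $(x,k)$ and $(x',k')$ with $x_0=a$, $x'_0=a'$, and distinctness of the pairs $(a,k)$, $(a',k')$ would force $(x,k)\ne(x',k')$ — contradicting uniqueness. Hence $\cP$ is a partition of $Y$. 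Conversely, if $\cP$ is a partition and $y$ had two $\sigma$-representations $(x,k)$, $(x',k')$ with $x,x'\in X$, then $y\in S^k\sigma([x_0]_X)\cap S^{k'}\sigma([x'_0]_X)$; disjointness of distinct members of $\cP$ forces $(x_0,k)=(x'_0,k')$, so $k=k'$ and $x_0=x'_0$, whence $\sigma(x)$ and $\sigma(x')$ agree after the common prefix $\sigma(x_0)$, and by a straightforward induction (peeling off $\sigma(x_i)$ letter-block by letter-block, using non-ambiguity of the block boundaries once the first block is pinned down) one gets $x=x'$. I would remark that the only subtlety here is the forward/backward induction pinning $x_i=x'_i$ for all $i\in\Z$: once $k=k'$ and the blocks line up at position $0$, equality of $y$ forces equality of all blocks, hence of all letters, provided $\sigma$ is non-erasing — and if erasing letters are present the statement must be read with the convention already fixed by the definition of $\sigma$-representation (which requires $0\le k<|\sigma(x_0)|$, so $\sigma(x_0)\neq\varepsilon$). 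I expect the union-equals-$Y$ step to be the true crux; the partition-versus-uniqueness translation is then essentially a restatement.
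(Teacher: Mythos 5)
Your overall route coincides with the paper's: both directions rest on the observation that $y\in S^k\sigma([a]_X)$ is the same as saying that $y$ admits a $\sigma$-representation $(x,k)$ with $x\in X$ and $x_0=a$, so that recognizability translates into the members of $\cP$ covering $Y$ and being pairwise disjoint. Your justification of $\bigcup\cP=Y$ (compactness of each member plus shift-invariance of the finite union) is in fact more detailed than the paper's one-line assertion that every element of $Y$ has a $\sigma$-representation, and that extra care is reasonable.

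The gap is in the last step of your converse. From the partition hypothesis you correctly extract $x_0=x'_0$ and $k=k'$, hence $\sigma(x)=\sigma(x')$; but you then conclude $x=x'$ by an induction that appeals to ``non-ambiguity of the block boundaries'' and passes from ``equality of all blocks'' to ``equality of all letters''. Neither is available for a general morphism: nothing forces $\sigma(A)$ to be a code, so an equal image can be cut into blocks in several ways even after the first block is anchored, and nothing forces $\sigma$ to be injective on $A$, so equal blocks do not yield equal letters (take $\sigma(a)=\sigma(b)$; then $\sigma(x)=\sigma(x')$ with $x_0=x'_0$ and $k=k'$ says nothing about $x_1$ versus $x'_1$). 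The induction must instead re-invoke the disjointness of $\cP$ at every step: $S^{|\sigma(x_0)|-k}(y)=\sigma(Sx)=\sigma(Sx')$ has the two representations $(Sx,0)$ and $(Sx',0)$, so applying the disjointness to this new point gives $x_1=x'_1$, and similarly forward and backward (with offset $|\sigma(x_{-1})|-1$ for the point $S^{-k-1}(y)$, and so on). This is exactly what the paper's phrase ``shifting $y$ if necessary'' encodes in contrapositive form: if $x\ne x'$, one shifts $y$ to a position where the two derived representations disagree in first letter or offset, exhibiting two intersecting distinct members of $\cP$. Your parenthetical remark about erasing letters does point at a genuine further delicacy (the backward induction stalls on letters with empty image, and the paper itself is silent on this), but the missing re-use of the partition hypothesis is the part of your argument that actually needs repair.
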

 \begin{proof}
   Assume first that $\sigma$ is recognizable on $X$. Every element
   of $Y$ has a $\sigma$-representation and thus the union of
   the elements of $\cP$ is $Y$. Next if two of them intersect,
   then some element of $Y$ has two distinct $\sigma$-representations,
   which is impossible.

   Conversely, if $\sigma$ is not recognizable on $X$, there is
   some $y$ with two distinct $\sigma$ representations
   $(x,k)$ and $(x',k')$.
   If $x=x'$ then $S^k\sigma([x_0])$ and $S^{k'}\sigma([x_0])$
   are two distinct elements of the family $\cP$ with nonempty intersection.
   Otherwise, shifting $y$ if necessary, we may assume that
   $x_0\ne x'_0$, whence the conclusion that $\cP$ is not a partition
   again.
 \end{proof}
 The partition $\cP$ above, called a \emph{partition in towers},
 plays an important role in the
 definition of a Bratteli diagram associated to a substitution shift
 (see~\cite{DurandPerrin2021}).
 \begin{example}
   The morphism  $\sigma\colon a\mapsto ab, b\mapsto a$
   is called the \emph{Fibonacci morphism}.
   It is recognizable on $X=A^\Z$
   since the family
   \begin{displaymath}
[ab]_Y, [b]_Y, [aa]_Y
   \end{displaymath}
   forms a partition of $Y$. One has
   $\sigma([b]_X)=[aa]_Y$ because $\sigma(a)$ and $\sigma(b)$ begin with $a$.
 \end{example}


 The definition of recognizability given above is a dynamical one
 and the
one in current use now (see~\cite{BertheSteinerThuswaldnerYassawi2019}
for example)
but it was given in a different
form (and only for endomorphisms)
in the articles of Moss\'e \cite{Mosse1992,Mosse1996}.

Let $\sigma\colon A^*\to B^*$ be a morphism, let  $x\in A^\Z$
be such that $y=\sigma(x)$ is two-sided infinite. Define the set of \emph{cutting points} of $x$ as
\begin{displaymath}
  C(x)=\{|\sigma(x_{[0,n)})|\mid n\ge 0\}\cup \{-|\sigma(x_{[n,0)})|\mid n<0\}.
\end{displaymath}
Given $N\ge 1$, let us say that
$\sigma$ is  \emph{recognizable in the sense of Moss\'e} for $x$
with scope $N$ if for $i,j\in\Z$, whenever
$y_{[i-N,i+N]}=y_{[j-N,j+N]}$, then $i\in C(x)\Leftrightarrow j\in C(x)$.

The following result connecting the two notions of
recognizability is proved in~\cite[Theorem 2.5]{BertheSteinerThuswaldnerYassawi2019}. All morphisms are supposed to be non-erasing
in~\cite{BertheSteinerThuswaldnerYassawi2019}, but the proof of (i)
remains the same in the general case.
\begin{proposition}\label{propositionTwoDefsRec}
  Let $\sigma\colon A^*\to B^*$ be  morphism,
  let $x\in A^\Z$ be such that $\sigma(x)$
  is two-sided infinite and let $X$ be the shift generated by $x$.
  The following assertions hold.
  \begin{enumerate}
  \item[\rm (i)] If $\sigma$ is  recognizable on $X$
    then it is recognizable in the sense of Moss\'e for $x$.
    \item[\rm(ii)] If the shift $X$ is minimal, if the morphism $\sigma$
      is non-erasing, injective on $A$ and  recognizable in the sense of
      Moss\'e for $x$, then $\sigma$ is recognizable on $X$.
      \end{enumerate}
\end{proposition}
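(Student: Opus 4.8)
The plan is to establish the two implications separately, translating between the cutting-point formulation and the dynamical $\sigma$-representation formulation.

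For part (i), I would argue by contraposition: suppose $\sigma$ is \emph{not} recognizable in the sense of Moss\'e for $x$, with $y=\sigma(x)$. Then for every scope $N$ there exist indices $i,j\in\Z$ with $y_{[i-N,i+N]}=y_{[j-N,j+N]}$ but (say) $i\in C(x)$ and $j\notin C(x)$. By a compactness argument (pass to a convergent subsequence of the shifted pairs $(S^i y, S^j y)$, using that $B^\Z$ is compact) I would extract a single point $z\in Y$ — here $Y$ is the shift-closure of $\sigma(X)$, but one should be slightly careful since we want $z$ to have representations from $X$ itself; the cleaner route is to shift $y$ by $i$ and by $j$ and take limits inside $X$ of the corresponding preimages. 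More precisely, write $i=|\sigma(x_{[0,n_i)})|$ (a genuine cutting point) so that $S^i y = \sigma(S^{n_i} x)$, a representation with $k=0$; and write $j$ as $|\sigma(x_{[0,n_j)})| + k_j$ with $0<k_j<|\sigma(x_{n_j})|$ strictly between consecutive cutting points, so $S^j y = S^{k_j}\sigma(S^{n_j}x)$ with $k_j\neq 0$. The hypothesis $y_{[i-N,i+N]}=y_{[j-N,j+N]}$ for arbitrarily large $N$ forces, after passing to limits of $S^{n_i}x$ and $S^{n_j}x$ in the compact space $X$, a point $z\in B^\Z$ with two $\sigma$-representations $(x',0)$ and $(x'',k)$, $x',x''\in X$, that are genuinely distinct because one has cutting point $0$ at the origin and the other does not. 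This contradicts recognizability on $X$.

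For part (ii), the converse direction, I would again use contraposition: assume $\sigma$ is not recognizable on $X$, so some $z\in B^\Z$ has two distinct $\sigma$-representations $(x',k')$ and $(x'',k'')$ with $x',x''\in X$. Since $X$ is minimal (hence uniformly recurrent), the point $x$ generating $X$ has all of $\cL(X)$ in its factors, and in fact $x',x''$ are themselves limits of shifts of $x$; the finite-window agreements forced by recognizability-in-the-sense-of-Moss\'e propagate along these approximations. Using non-erasingness and injectivity of $\sigma$ on $A$, the cutting points of a representation are determined by the underlying sequence, and two distinct representations give, for each $N$, a pair of positions $i\ne j$ in a long enough factor of $y=\sigma(x)$ where $y$ agrees on a window of radius $N$ but exactly one of the positions is a cutting point of $x$ — contradicting recognizability in the sense of Moss\'e for $x$. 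Minimality is what lets us transfer the ``distinct representations of some $z\in Y$'' back to ``a bad pair of indices inside the single orbit of $x$''; injectivity on $A$ rules out the degenerate case where $x'_0\ne x''_0$ but $\sigma(x'_0)=\sigma(x''_0)$, which would break the link between cutting points and the combinatorics of $y$.

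The main obstacle, in both directions, is the passage between a \emph{local} statement about a single bi-infinite orbit (the sequence $x$ and its image $y$, with cutting points) and a \emph{global} statement about the whole shift $X$ (existence of two $\sigma$-representations of some point by elements of $X$). In direction (i) this is handled by a compactness/limit argument and requires no extra hypotheses, which is why (i) holds in full generality; in direction (ii) the transfer genuinely needs minimality (so that every point of $X$, in particular the two $x',x''$ witnessing non-recognizability, is approximated by shifts of $x$ and the Moss\'e window condition along $x$ applies) together with non-erasing and injective-on-$A$ to keep cutting points well-behaved. I would expect the write-up of (i) to be a short compactness argument and (ii) to be the longer, more delicate part, essentially reproducing the relevant portion of \cite[Theorem 2.5]{BertheSteinerThuswaldnerYassawi2019}.
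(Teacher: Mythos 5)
The paper does not actually prove Proposition~\ref{propositionTwoDefsRec}: it quotes it from \cite[Theorem~2.5]{BertheSteinerThuswaldnerYassawi2019}, adding only that the proof of (i) survives the removal of the non-erasing hypothesis. So there is no in-paper argument to match yours against, and I can only judge the sketch on its own terms. Your part (i) is essentially the standard argument and is sound; one could phrase it more directly (recognizability makes $\hat{\sigma}\colon X^\sigma\to Y$ a continuous bijection of compact spaces, hence a homeomorphism, so $\hat{\sigma}^{-1}$ is a sliding block code of some radius $N$, and that $N$ is a Moss\'e scope), but your contrapositive-plus-compactness version carries the same content. The one point you should make explicit for erasing morphisms is why the limit preimages $x',x''$ still have two-sided infinite images, so that $(x',0)$ and $(x'',k)$ are genuine $\sigma$-representations of the limit point $z$; this is exactly where \cite{BertheSteinerThuswaldnerYassawi2019} uses non-erasingness, and the paper's remark that the proof ``remains the same'' glosses over it as well.

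Part (ii) is where the genuine gap lies. You correctly name the ingredients (minimality, injectivity on $A$, non-erasingness) and the overall strategy, but the crux of the proof is compressed into the sentence ``the finite-window agreements forced by recognizability-in-the-sense-of-Moss\'e propagate along these approximations'' --- and that propagation \emph{is} the theorem. Concretely, what must be established is: for every $x'\in X$ (not merely for the generating point $x$) and every $m\in\Z$, whether $m\in C(x')$ is determined by the window $\sigma(x')_{[m-N,m+N]}$, by the same rule as for $x$. Granting that, two representations $(x',k')$ and $(x'',k'')$ of the same $z$ induce the same set of cutting positions in the coordinates of $z$; since $0\in C(x')$ and the next cutting point of $x'$ is $|\sigma(x'_0)|>k'$, the value $-k'$ is the largest non-positive element of that common set, and likewise $-k''$, so $k'=k''$; the blocks between consecutive cutting points then coincide, non-erasingness guarantees these blocks are nonempty, and injectivity on $A$ recovers $x'=x''$ letter by letter. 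Your sketch jumps from ``two distinct representations of $z$'' to ``a bad pair of positions inside $y$'' without this intermediate step, and it is precisely this step --- the two-sided analogue of Claims 1--4 in the proof of Proposition~\ref{propositionWeakRec} --- that consumes most of the proof in \cite{BertheSteinerThuswaldnerYassawi2019}. As written, part (ii) is an outline rather than a proof.
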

Assertion (ii) is not true without its restrictive hypotheses on $X$ and
$\sigma$. Indeed, for example, if $\sigma\colon a\mapsto a,b\mapsto\varepsilon$,
then $\sigma$ is recognizable in the sense of Moss\'e for every $x\in A^\Z$
with a finite number of $b$, but it is not recognizable on the
shift $X$ generated by $x$ since $\sigma(X)=a^\infty$.

The notion of recognizability is closely related to the notion
of \emph{tower construction} that we recall now. Given a morphism
$\sigma\colon A^*\to B^*$ and a shift space $X$ on $A$, let $(X^\sigma,T)$ be the dynamical system
defined by
\begin{equation}
  X^\sigma=\{(x,k)\mid x\in X, 0\le k<|\sigma(x_0)|\}\label{eqXsigma}
\end{equation}
and
\begin{displaymath}
  T(x,k)=\begin{cases}(x,k+1)&\mbox{ if $k+1<|\sigma(x_0)|$}\\
  (S(x),0)&\mbox{ otherwise}
  \end{cases}
\end{displaymath}
Then the map $\hat{\sigma}\colon (x,i)\mapsto S^i\sigma(x)$
is a morphism of dynamical systems from $(X^\sigma,T)$
onto the shift $Y$ which is the closure under the shift of $\sigma(X)$.
The morphism $\sigma$ is recognizable on $X$ if and only if
$\hat{\sigma}$ is a homeomorphism.

Note that we may consider $(X^\sigma,T)$ as a shift space
on the alphabet
\begin{equation}
  A^\sigma=\{(a,k)\mid a\in A, 0\le k<|\sigma(a)|\}.\label{eqAsigma}
  \end{equation}
Indeed, there is a unique morphism $\alpha$ from $(X^\sigma,T)$
into $(A^\sigma)^\Z$
 such that
\begin{equation}
  \alpha(x,k)_0=(x_0,k)\label{eqalpha}
\end{equation}

 
\paragraph{Endomorphisms}
A morphism $\sigma\colon A^*\to A^*$ is called an
\emph{endomorphism}.
Let $\sigma\colon A^*\to A^*$ be an endomorphism. The
language $\cL(\sigma)$ is the set of factors of
the words $\sigma^n(a)$ for some $n\ge 0$ and some $a\in A$.
The shift $X(\sigma)$ is the set of $x\in A^\Z$ with all
their factors in $\cL(\sigma)$. Such a shift is
called a \emph{substitution shift}.

Endomorphisms are often called \emph{substitutions}
(in general with additional requirements, such as being
non erasing and with $\cL(\sigma)=\cL(X(\sigma))$ as in~\cite{DurandPerrin2021}).

The troubles arising with erasable letters are simplified
for substitution shifts  since by \cite[Lemma 3.13]{BealPerrinRestivo2022},
for every $x\in X(\sigma)$, the sequence $\sigma(x)$ is in $X(\sigma)$
(in particular, $\sigma(x)$ is a two-sided infinite sequence).

An endomorphism $\sigma\colon A^*\to A^*$ is \emph{primitive}
if there is an integer $n\ge 1$ such that for every $a,b\in A$
one has $|\sigma^n(a)|_b\ge 1$.
The following is well known (see \cite{DurandPerrin2021} for example).
\begin{proposition}
  If $\sigma\colon A^*\to A^*$ is primitive
  and $\Card(A)\ge 1$, then $X(\sigma)$ is minimal.
\end{proposition}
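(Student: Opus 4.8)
The plan is to reduce minimality to uniform recurrence: by the proposition above stating that a shift space is minimal if and only if it is uniformly recurrent, it suffices to show that $X(\sigma)$ is nonempty and uniformly recurrent. We may assume $\Card(A)\ge 2$, the unary case being immediate. If $n$ is a primitivity exponent, then summing $|\sigma^n(a)|_b\ge 1$ over $b$ gives $|\sigma^n(a)|\ge\Card(A)\ge 2$ for every $a$, hence $|\sigma^{kn}(a)|\ge 2^k\to\infty$; in particular $\sigma$ is non-erasing, $X(\sigma)\neq\emptyset$ (a standard fact), and $\cL(X(\sigma))\subseteq\cL(\sigma)$ directly from the definition of $X(\sigma)$.

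I would first establish the auxiliary claim that for every nonempty word $w\in\cL(\sigma)$ there is an integer $N=N(w)$ with the property that $w$ is a factor of $\sigma^N(a)$ for every letter $a\in A$. Indeed, $w$ occurs in $\sigma^m(c)$ for some $c\in A$ and some $m\ge 0$; by primitivity $c$ occurs in $\sigma^n(a)$ for every $a$, so applying $\sigma^m$ shows that $\sigma^m(c)$, and hence $w$, occurs in $\sigma^{m+n}(a)$ for every $a$, so $N=m+n$ works. Since $w\neq\varepsilon$, this also forces $|\sigma^N(a)|\ge 1$ for all $a$.

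The core step would then be a block-covering argument. Fix a nonempty $w\in\cL(X(\sigma))$, take $N$ as in the claim, and set $M=\max_{a\in A}|\sigma^N(a)|$ and $L=2M+1$. Let $v$ be any word of $\cL_L(X(\sigma))$; then $v\in\cL(\sigma)$, and, after increasing the exponent if necessary (if $v$ occurs in $\sigma^{k_0}(c_0)$ with $k_0<N$, it still occurs in $\sigma^{k_0+jn}(c_0)$ for any $j\ge 1$, because $c_0$ occurs in $\sigma^{jn}(c_0)$), $v$ is a factor of some $\sigma^k(c)$ with $k\ge N$. Writing $\sigma^{k-N}(c)=d_1d_2\cdots d_r$ gives $\sigma^k(c)=\sigma^N(d_1)\sigma^N(d_2)\cdots\sigma^N(d_r)$, a concatenation of nonempty blocks each of length at most $M$. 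Since an occurrence of $v$ has length $L>2M$, it cannot lie within the union of two consecutive blocks, so it entirely contains some block $\sigma^N(d_i)$; as that block contains $w$, so does $v$. Thus $w$ is a factor of every word of $\cL_L(X(\sigma))$, which is exactly uniform recurrence, and minimality follows. The only delicate point is this last combinatorial step — the index bookkeeping ensuring that a factor longer than twice the maximal block length must swallow a whole block — together with the normalization $k\ge N$; both rely on the blocks being nonempty, which holds by the auxiliary claim because $w\neq\varepsilon$.
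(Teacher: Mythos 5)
The paper itself offers no proof of this proposition: it is stated as ``well known'' with a reference to \cite{DurandPerrin2021}, so there is no in-paper argument to compare against. Your proof is correct and is essentially the standard textbook argument: reduce minimality to uniform recurrence, use primitivity to find a single exponent $N$ such that $\sigma^N(a)$ contains the target factor $w$ for \emph{every} letter $a$, and then use the block decomposition $\sigma^k(c)=\sigma^N(d_1)\cdots\sigma^N(d_r)$ together with the bound $L=2M+1>2M$ to force any factor of length $L$ to contain a whole block $\sigma^N(d_i)$, hence $w$. The supporting reductions are all sound: $|\sigma^n(a)|\ge\Card(A)\ge 2$ gives letter growth and non-erasingness, nonemptiness of $X(\sigma)$ follows by compactness from the existence of arbitrarily long words in the factor-closed language $\cL(\sigma)$, the inclusion $\cL(X(\sigma))\subseteq\cL(\sigma)$ is definitional, and the renormalization to $k\ge N$ via the occurrence of $c_0$ in $\sigma^{jn}(c_0)$ is a correct use of primitivity; the interval argument that a factor of length $>2M$ must swallow one of the (nonempty) blocks is also correct. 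One pedantic caveat, which is a defect of the statement as literally written rather than of your proof: for $\Card(A)=1$ and $\sigma\colon a\mapsto a$ (which is primitive), the paper's definition gives $\cL(\sigma)=\{\varepsilon,a\}$, hence $X(\sigma)=\emptyset$, which is not minimal in the paper's sense; so ``the unary case is immediate'' only when $|\sigma(a)|\ge 2$. This does not affect the substantive case $\Card(A)\ge 2$, where your argument is complete.
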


The following result is from \cite{Damanik&Lenz:2006} (see also \cite{DurandPerrin2021}).
\begin{proposition}\label{propositionMinimalLR}
Every minimal substitution shift is linearly recurrent.
\end{proposition}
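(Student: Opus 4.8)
The plan is to run the classical combinatorial argument behind Durand's characterisation of linear recurrence. First dispose of the trivial cases: if $X=X(\sigma)$ is empty or periodic, linear recurrence is immediate, so assume $X$ is infinite. An infinite minimal substitution shift equals $X(\tau)$ for some \emph{primitive} non-erasing substitution $\tau$ with $\cL(\tau)=\cL(X)$ --- one restricts to the letters actually occurring in $\cL(X)$, replaces $\sigma$ by a suitable power, and deletes erasable letters; this reduction is standard (see \cite{DurandPerrin2021,Damanik&Lenz:2006}). Hence we may assume $\sigma$ itself is primitive, non-erasing, with $\cL(\sigma)=\cL(X)$. Set $K=\max_{a\in A}|\sigma(a)|$ and fix $m$ such that $\sigma^m(a)$ contains every letter, for every $a$. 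Since $a$ occurs in $\sigma^m(b)$ we get $|\sigma^{n+m}(b)|\ge|\sigma^n(a)|$ for all $a,b,n$, while plainly $|\sigma^{n+1}(a)|\le K|\sigma^n(a)|$; combining these yields a constant $D\ge1$, depending only on $\sigma$, with $\max_a|\sigma^n(a)|\le D\min_a|\sigma^n(a)|$ for all $n\ge0$, and moreover $\min_a|\sigma^n(a)|\to\infty$ because $X$ is infinite.

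The core of the proof is the following preimage estimate: every $w\in\cL(X)$ is a factor of $\sigma^n(u)$ for some $n$ and some $u\in\cL(X)$ of length bounded independently of $w$. Put $\ell=|w|$ and let $n$ be maximal with $\max_a|\sigma^n(a)|\le\ell$ (it exists once $\ell\ge1$, since the maxima tend to infinity). By maximality $\max_a|\sigma^n(a)|>\ell/K$, hence $\min_a|\sigma^n(a)|>\ell/(KD)$. Now $\cL(X)$ is exactly the set of factors of the two-sided words $\sigma^n(x')$ with $x'\in X$ (using minimality and the fact that $\sigma(X)\subseteq X$), so there is a shortest $u\in\cL(X)$ with $w$ a factor of $\sigma^n(u)$. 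By minimality of $u$ the occurrence of $w$ meets both the first and the last $\sigma^n$-block of $\sigma^n(u)$, so all the remaining blocks lie inside $w$; comparing lengths gives $|u|<2+\ell/\min_a|\sigma^n(a)|<2+KD=:p$, with $p$ depending only on $\sigma$.

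It remains to turn this into a recurrence bound. Since $X$ is minimal it is uniformly recurrent, so there is a least integer $R$ --- depending only on $\sigma$, as $\cL(X)$ has finitely many words of length $\le p$ --- such that every word of $\cL(X)$ of length $R$ contains every word of $\cL(X)$ of length $\le p$. Take any $W\in\cL(X)$ with $|W|\ge(R+2)\ell$; it is a factor of some $\sigma^n(x')$, $x'\in X$, and we let $\sigma^n(x'_{[i,j]})$ be the shortest block of consecutive images of letters of $x'$ that contains $W$. Since $|\sigma^n(x'_k)|\le\ell$ for each $k$, we have $(j-i+1)\ell\ge|W|\ge(R+2)\ell$, so $x'_{[i+1,j-1]}$ has length at least $R$ and hence contains $u$; therefore $\sigma^n(x'_{[i+1,j-1]})$, which is a factor of $W$, contains $\sigma^n(u)$ and so contains $w$. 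Thus, once $\ell$ is above the threshold implicit in the previous step, every word of $\cL(X)$ of length $\ge(R+2)\ell$ contains $w$, so every return word to $w$ has length less than $(R+3)|w|$. For the finitely many shorter $w$, uniform recurrence bounds the return words by a constant $C_0$, and we conclude that $X$ is linearly recurrent with constant $\max(C_0,R+3)$.

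I expect the main obstacle to be twofold. The reduction in the first paragraph must be carried out carefully so that $\tau$ is genuinely primitive and $\cL(\tau)=\cL(X)$ --- routine but delicate when $\sigma$ has erasable or slowly growing letters. More essentially, everything hinges on the bounded-distortion inequality $\max_a|\sigma^n(a)|\le D\min_a|\sigma^n(a)|$, which is exactly where primitivity is used: it is what makes the preimage length $p$, and then the recurrence constant $R$, independent of $w$, and without it the argument would not produce a linear bound at all.
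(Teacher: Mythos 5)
The paper itself offers no proof of this proposition --- it is quoted from Damanik--Lenz (with a pointer to Durand--Perrin) --- so there is no internal argument to compare against; I can only assess your proof on its own terms. Its core is the classical Durand--Host--Skau argument that a primitive, non-erasing substitution shift is linearly recurrent, and that part is correct and complete: the bounded-distortion estimate $\max_a|\sigma^n(a)|\le D\min_a|\sigma^n(a)|$ is derived correctly from primitivity, the choice of $n$ maximal with $\max_a|\sigma^n(a)|\le|w|$ does give the preimage bound $|u|<2+KD$, and the passage from that bound to the return-word bound via uniform recurrence is sound (including the edge cases $n=0$ and the finitely many short $w$). The one place where the weight is carried by a citation rather than an argument is your opening reduction: the claim that an infinite minimal substitution shift equals $X(\tau)$ for a \emph{primitive} non-erasing $\tau$ with $\cL(\tau)=\cL(X)$ is a genuine theorem, not the routine restrict-to-occurring-letters, take-a-power, delete-erasable-letters recipe your parenthesis suggests; producing primitivity from minimality is precisely the hard content of the Damanik--Lenz paper that this proposition cites. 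So, as written, your proof fully establishes the primitive case and outsources the non-primitive case to the same reference the paper itself relies on; you do flag this honestly, but a self-contained proof of the proposition as stated would have to supply that reduction, which is where all the remaining difficulty lives.
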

Combining Propositions \ref{propositionMinimalLR} and
\ref{propositionLRisLinear}, we obtain that every minimal
substitution shift has at most linear complexity.

Note that, by Equation~\eqref{eqsn}, this implies that
for a minimal substitution shift  $X(\sigma)$, the number of left-special
sequences is finite.
\begin{example}
  The Fibonacci morphism $\sigma\colon a\mapsto ab,b\mapsto a$
  is primitive. The complexity of $X(\sigma)$ is $p_n(X)=n+1$.
  As a minimal shift of complexity $n+1$, the shift $X(\sigma)$ is,
  by definition, a Sturmian shift.
  The sequence $\sigma^\omega(a)$  is
  the unique one-sided sequence having all $\sigma^n(a)$ as
  prefixes. It is the unique fixed point
  of $\sigma$ in $X(\sigma)^+$
  and also the unique left-special sequence in $X(\sigma)^+$.
  \end{example}
As a consequence, we have the following finiteness result.
\begin{proposition}\label{propositionNbAsymptotic}
  Let $\sigma$ be a morphism. If $X(\sigma)$ is minimal,
  the number of non-trivial
  asymptotic classes of $X(\sigma)$ is finite and bounded by the number
  of left-special sequences.
\end{proposition}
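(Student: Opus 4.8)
The plan is to build an injection from the set of non-trivial asymptotic classes of $X(\sigma)$ into the set $\Sa$ of left-special one-sided sequences of $X(\sigma)^+$, and then to invoke the finiteness of $\Sa$. That finiteness is already available: since $X(\sigma)$ is minimal, Propositions~\ref{propositionMinimalLR} and~\ref{propositionLRisLinear} show that $X(\sigma)$ is linearly recurrent and of at most linear complexity, so by~\eqref{eqsn} the number of left-special words of each length is bounded, whence $\Sa$ is finite (this is exactly the remark recorded before the statement). I would also use the elementary observation that two points $x,y\in X(\sigma)$ are asymptotically equivalent if and only if the one-sided sequences $x^+,y^+$ are tail-equivalent, meaning $S^p(x^+)=S^q(y^+)$ for some $p,q\ge 0$; this is immediate from the definitions once one notices that a negative shift of a two-sided sequence becomes a positive shift on its positive part.

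The main step is the claim that every non-trivial asymptotic class $C$ contains a point $x$ with $x^+$ tail-equivalent to some element of $\Sa$. To prove it, choose $x,y\in C$ lying in distinct orbits; since $S^n(x)^+=S^m(y)^+$ for some $n,m\in\Z$, and $S^n(x),S^m(y)$ again lie in $C$, I may replace $x,y$ by these shifts and so assume $x^+=y^+$. Then $x\neq y$, so the set $\{i\in\Z\mid x_i\neq y_i\}$ is nonempty and contained in $\{i<0\}$, hence has a greatest element $j$. Put $z=S^{j+1}(x)^+$; because $x_i=y_i$ for all $i\ge j+1$ we also have $z=S^{j+1}(y)^+$, and then $x_jz=S^{j}(x)^+$ and $y_jz=S^{j}(y)^+$ both belong to $X(\sigma)^+$, with $x_j\neq y_j$. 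Hence $\ell(z)\ge 2$, so $z\in\Sa$; moreover $z$ is tail-equivalent to $x^+$ since $x^+=S^{-j-1}(z)$ with $-j-1\ge 0$. This establishes the claim.

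Choosing one such $z_C\in\Sa$ for every non-trivial asymptotic class $C$ defines a map $C\mapsto z_C$, and it is injective: if $z_C=z_{C'}$, pick $x\in C$ and $x'\in C'$ with $x^+$ and $x'^+$ tail-equivalent to $z_C$; then $x^+$ and $x'^+$ are tail-equivalent, so $x$ and $x'$ are asymptotically equivalent and $C=C'$. Therefore the number of non-trivial asymptotic classes is at most $\Card(\Sa)$, the finite number of left-special sequences of $X(\sigma)^+$. I do not expect a real obstacle here; the only points deserving care are the bookkeeping that translates two-sided asymptotic equivalence into one-sided tail-equivalence, and the verification that the extracted sequence $z$ is genuinely left-special, that is, that prepending either of the two distinct letters $x_j$ and $y_j$ to $z$ yields an element of $X(\sigma)^+$.
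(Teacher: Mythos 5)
Your proof is correct and essentially matches the paper's: both reduce the count of non-trivial asymptotic classes to the (finite) set of left-special one-sided sequences via the same key extraction, namely taking two points of a class in distinct orbits, normalizing so their positive parts agree, and reading off a left-special sequence at the last index where they differ. The only difference is the direction of the correspondence --- you build an injection from classes into left-special sequences, whereas the paper builds a surjection the other way, which obliges it to treat the periodic case separately in order to check that each left-special sequence really determines a non-trivial class; your version sidesteps that step.
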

\begin{proof}
  If $X(\sigma)$ is periodic, there is no non-trivial
  asymptotic class. Thus, since $X(\sigma)$ is minimal,
  we may assume that $X(\sigma)$
  (and also $X(\sigma)^+$) is
  aperiodic.
  Let $\alpha$ be the map which assigns to a left-special sequence
  $x\in X(\sigma)^+$ the asymptotic class of the points
  $z\in X(\sigma)$ such that $z^+=x$. Since $x$ is left-special
  and since $x$ cannot be periodic,
  the class $\alpha(x)$ is non-trivial.
  Let indeed $z,z'\in X$ be distinct and such that $z'^+=z^+=x$
  (they exist since $x$ is left-special). If $z$ is a shift
  of $z'$, then $x$ is a proper shift of itself and thus
  it is periodic, a contradiction.
  
  The map $\alpha$ is surjective from the set of left-special
  sequences to the set of non-trivial asymptotic classes.
  Indeed, let $C$ be a non-trivial asymptotic class. Let
  $x,y\in C$ be in distinct orbits and let $n,m\in\Z$ be such that
  $S^nx^+=S^my^+$. Set $z=S^nx$ and $t=S^my$. Then $z,t$ are distinct
  points in $C$ such that $z^+=t^+$ and thus there is at least one
  $u\in C$ such that $u^+$ is left-special.
\end{proof}

Let $X$ be a shift space.
For an asymptotic class $C$ of $X$, we denote $\omega(C) = \Card(o(C))-1$
 where $o(C)$ is the set of orbits contained in $C$.
For a right infinite word $u \in X^+$, let 
\begin{displaymath}
\ell_C(u) = \Card\{ a \in A \mid x^+ = au \mbox{ for some $x\in C$} \}.
\end{displaymath}
We denote by $LS_\omega(C)$ the set of right infinite words $u$ such that $\ell_C(u) \ge 2$.

The following statement is proved in~\cite[Proposition 4.3]{DolcePerrin2020}.

\begin{proposition}\label{pro:eqomegaC}
Let $X$ be a  shift space and let $C$ be a right asymptotic class.
Then
\begin{equation}
\omega(C) = \sum_{u\in LS_\omega(C)}(\ell_C(u)-1)
\label{equationC}
\end{equation}
where both sides are simultaneously finite.
\end{proposition}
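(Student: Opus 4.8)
The plan is to set up a bijective correspondence between the orbits contained in $C$ (modulo one distinguished orbit) and the ``extra'' left extensions counted on the right-hand side, and to use the additivity of a telescoping-type sum. First I would fix notation: since $C$ is a right asymptotic class, for any two points $x,y\in C$ there are $n,m$ with $S^n(x)^+=S^m(y)^+$; replacing each point by a suitable shift, I may assume every point of $C$ has its ``tail'' lying in a common set of right-infinite words, and the tails of points in $C$ are totally ordered by the ``eventually agrees with'' relation in a tree-like fashion. Concretely, consider the set $T$ of right-infinite words $u$ of the form $u=S^k(x)^+$ for some $x\in C$ and some $k$ large enough that $u$ is a common tail of all points of $C$; then $T$ is exactly the set of tails, and for $u\in T$ the quantity $\ell_C(u)$ counts the distinct letters $a$ with $au\in T$, i.e. the branching of this tail-tree at $u$.

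Next I would make the counting precise. The orbits in $o(C)$ correspond bijectively to the ``ends'' of the tail-tree: each orbit determines one bi-infinite branch, and distinct orbits give distinct branches (this is where non-periodicity enters — if $C$ is periodic both sides are $0$ and we are done, so assume $C$ aperiodic, as in the proof of Proposition~\ref{propositionNbAsymptotic}). For each $u\in LS_\omega(C)$, the node $u$ has $\ell_C(u)$ outgoing edges, hence contributes $\ell_C(u)-1$ to the number of branchings along the tree. A standard tree/graph count then gives that the total number of branches minus $1$ equals $\sum_{u\in LS_\omega(C)}(\ell_C(u)-1)$: formally, I would order the right-infinite tails lexicographically (using a fixed order on $A$), enumerate the orbits $o_0<o_1<\cdots$, and for each consecutive pair $o_i,o_{i+1}$ identify the unique longest common tail, which is a left-special word in $LS_\omega(C)$; summing $\ell_C(u)-1$ over all $u$ recovers $\Card(o(C))-1=\omega(C)$ by telescoping, since passing each left-special node ``uses up'' exactly $\ell_C(u)-1$ of the gaps between consecutive branches. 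The simultaneous finiteness is immediate from this correspondence: a branching node with $\ell_C(u)\ge 2$ separates at least two orbits, so there are at most $\omega(C)$ of them counted with multiplicity, and conversely $\omega(C)$ is bounded by that sum; if either is infinite, so is the other.

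The main obstacle I expect is the careful justification that the tails of points of $C$ really do form a tree in which every branching is ``finite arity and realized by a left-special right-infinite word'', and that each orbit meets each level of this tree in exactly one vertex — in other words, ruling out pathologies where a single orbit would account for two branches, or where the ``longest common tail'' of two orbits fails to exist. For aperiodic $C$ this is exactly the content that two distinct points with the same right tail lie in distinct orbits, argued as in Proposition~\ref{propositionNbAsymptotic}; I would reuse that argument verbatim. Once the tree structure and the orbit-to-branch bijection are in hand, the identity~\eqref{equationC} is a purely combinatorial summation identical in spirit to~\eqref{eqsn}, and the simultaneous finiteness falls out of the same bijection.
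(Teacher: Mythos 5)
The paper does not actually prove this proposition: it is quoted from \cite[Proposition 4.3]{DolcePerrin2020}, so there is no in-paper argument to compare yours against. Your combinatorial core --- view the set of tails $T=\{y^+\mid y\in C\}$ as a graph under $u\mapsto S(u)$, identify orbits of $C$ with branches, and recover $\Card(o(C))-1$ as the total branching $\sum_u(\ell_C(u)-1)$ by a lexicographic/telescoping count --- is the natural route and surely close to the cited proof.

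There is, however, a genuine gap in the reduction ``if $C$ is periodic both sides are $0$, so assume $C$ aperiodic.'' With the paper's conventions (periodic means \emph{all} points periodic, aperiodic means \emph{all} points aperiodic) these cases are not exhaustive, and the proposition is stated for an \emph{arbitrary} shift space $X$: a right asymptotic class can contain both periodic and aperiodic points. For example, if $z_0=b$ and $z_n=a$ for $n\ne 0$, the orbit closure of $z$ contains $a^\infty$, and $S(z)^+=(a^\infty)^+$, so $C$ consists of the orbit of $z$ together with the fixed point $a^\infty$; here $\omega(C)=1$, the tail $a^\omega$ is periodic, and the graph of tails has a self-loop at $a^\omega$ rather than being a tree. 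In this situation your orbit-to-end bijection and the ``unique longest common tail of consecutive orbits'' both break down: the branch of the periodic orbit is the single vertex $\{a^\omega\}$, contained in the branch of $z$, and a single aperiodic orbit can even contribute two distinct letters at a periodic tail (if $x^+=e(cd)^\omega$ then $S^2(x)^+=d(cd)^\omega$). The identity still holds --- the periodic point forced into $C$ by closedness of $X$ exactly accounts for the extra branching at the periodic tail --- but verifying this is precisely where the content of the proof lies, and your sketch, which only treats the tree case, does not address it. A secondary point: a word that is ``a common tail of all points of $C$'' need not exist when $C$ contains infinitely many orbits, so $T$ should simply be taken to be $\{y^+\mid y\in C\}$, which is legitimate because every asymptotic class is shift-invariant.
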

The following result is Moss\'e's Theorem (see \cite{DurandPerrin2021} for references). A morphism is \emph{aperiodic} if $X(\sigma)$ is aperiodic.

\begin{theorem}\label{theoremMosse}
  If $\sigma\colon A^*\to A^*$ is a primitive aperiodic morphism,
  then $\sigma$ is recognizable on $X(\sigma)$.
\end{theorem}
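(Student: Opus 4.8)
\textbf{Proof proposal for Moss\'e's Theorem (Theorem~\ref{theoremMosse}).}
The plan is to assume $\sigma$ is primitive and aperiodic, hence by Proposition~\ref{propositionMinimalLR} and Proposition~\ref{propositionLRisLinear} the shift $X(\sigma)$ is minimal, linearly recurrent and of at most linear complexity, and to derive a contradiction from the failure of recognizability. First I would record the reductions. By passing to a power $\sigma^n$ one may assume $\sigma$ maps every letter to a word of length at least $2$ (primitivity makes $|\sigma^n(a)|\to\infty$), and the reductions in the literature let us further assume $\sigma$ is non-erasing and injective on $A$; recognizability of $\sigma^n$ is equivalent to that of $\sigma$ on a minimal shift, so no generality is lost. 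Then, by Proposition~\ref{propositionTwoDefsRec}(ii), it suffices to prove recognizability in the sense of Moss\'e for a fixed $x\in X(\sigma)$ with $y=\sigma(x)$ two-sided infinite: i.e. to find $N\ge 1$ such that equality of the length-$(2N+1)$ windows of $y$ around positions $i$ and $j$ forces $i\in C(x)\Leftrightarrow j\in C(x)$.

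The heart of the argument is a quantitative ``no long common context'' estimate. Define, for a position $i\in C(x)$ say with $i=|\sigma(x_{[0,n)})|$, the two one-sided words read to the right of $i$ in $y$, and similarly to the left; the key object is the pair (left-context length, right-context length) over which two occurrences of the \emph{same} factor of $y$ can be ``desynchronized'' with respect to the cutting-point structure. Concretely I would introduce, following Moss\'e, for each pair of positions $i<j$ with the same arbitrarily long two-sided context in $y$, the induced alignment of the preimages under $\sigma$, and show using linear recurrence that a desynchronized pair would propagate: applying $\sigma$ repeatedly (or rather reading the preimage of the preimage, using injectivity and minimality to desubstitute), one produces factors of $\cL(\sigma)$ with unboundedly long common extensions on both sides but carrying two distinct cutting-point patterns. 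This yields either a periodic point of $X(\sigma)$ — contradicting aperiodicity — or an infinite strictly descending chain of ``defect'' quantities (the mismatch between the two cutting sequences), which is impossible since that quantity is a bounded non-negative integer. The bound on the defect is exactly where at-most-linear complexity, equivalently boundedness of $s_n(X)$ from Proposition~\ref{propositionCassaigne}, and the finiteness of the set of return words enter.

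Making the induction precise is the main obstacle. One must set up the right invariant: given two occurrences of a long common factor of $y$, look at the set of positions inside that factor which are cutting points for the first occurrence versus for the second, and track how this mismatch behaves when one desubstitutes one level. The delicate point is that desubstitution is only ``locally unique'' a priori — that is, one only controls it on a window whose size one is trying to bound — so the argument is genuinely bootstrapping: a hypothetical counterexample of unbounded scope is used to build, by a compactness/diagonal extraction in $X(\sigma)^\Z$, a limit configuration consisting of a bi-infinite $y$ with two full $\sigma$-representations agreeing on all of $\Z$ except with different cutting-point sets, and then minimality plus injectivity force these two representations to coincide up to a shift, while the shift amount is a period of the underlying point, contradicting aperiodicity. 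I would then close by invoking Proposition~\ref{propositionTwoDefsRec}(ii) to upgrade Moss\'e-recognizability at $x$ to genuine recognizability of $\sigma$ on the minimal shift $X(\sigma)$. Since the excerpt explicitly states this theorem ``without proof'', I would in fact keep the write-up at the level of this sketch and refer to \cite{Mosse1992,Mosse1996,DurandPerrin2021} for the full combinatorial details.
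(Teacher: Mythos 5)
The paper states Theorem~\ref{theoremMosse} without proof, referring to \cite{DurandPerrin2021} (after Moss\'e \cite{Mosse1992,Mosse1996}); within the paper it is in any case subsumed by Theorem~\ref{theoremRecognizabilityAperiodic}, which is likewise only cited. So there is no in-paper argument to compare yours with, and your decision to stop at a sketch and cite the same sources matches what the authors do. Your outline is also a faithful description of the standard strategy (reduce to Moss\'e-recognizability via Proposition~\ref{propositionTwoDefsRec}(ii), then rule out desynchronized occurrences by desubstitution, using linear recurrence and aperiodicity).

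Since the sketch could be mistaken for a proof, two of its steps deserve to be flagged as genuine gaps rather than routine reductions. First, the reduction to ``$\sigma$ injective on $A$'' is asserted, not established, and it is exactly the hypothesis you need to invoke Proposition~\ref{propositionTwoDefsRec}(ii) at the end: nothing in ``primitive aperiodic'' forces injectivity on letters, passing to a power $\sigma^n$ does not create it, and Theorem~\ref{theoremMosse} carries no such hypothesis. Handling non-injective (and, in general, non-proper) substitutions is one of the historically delicate points of this theorem, so either you must supply the reduction or prove the dynamical statement directly. Second, the ``strictly descending defect'' induction is named but not carried out, and it is the entire content of Moss\'e's argument; moreover your claim that Cassaigne's bound on $s_n(X)$ (Proposition~\ref{propositionCassaigne}) is ``exactly where'' the descent terminates is not substantiated --- the standard proofs run on the linear-recurrence constant from Proposition~\ref{propositionMinimalLR} and the growth of $|\sigma^n(a)|$, not on $s_n$ directly. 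Finally, a caution specific to this paper: keep the windows two-sided throughout the bootstrapping. As the rest of the note shows, the one-sided analogue of the statement is false for primitive aperiodic morphisms, so any slip from $y_{[i-N,i+N]}$ to $y_{[i,i+N)}$ in the compactness/limit step would break the argument.
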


The following generalization of Moss\'e's Theorem was proved
in~\cite{BertheSteinerThuswaldnerYassawi2019} for non-erasing morphisms.
A different proof holding in the more general case of morphisms
with erasable letters was given in~\cite{BealPerrinRestivo2021}.
\begin{theorem}\label{theoremRecognizabilityAperiodic}
  Any endomorphism $\sigma\colon A^*\to A^*$ is recognizable on $X(\sigma)$
  at aperiodic points.
\end{theorem}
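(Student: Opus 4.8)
The plan is to reduce the statement to Moss\'e's Theorem (Theorem~\ref{theoremMosse}) in two stages: first removing erasable letters, and then, inside a non-erasing morphism, isolating a primitive ``core'' carried by the letters that grow under $\sigma$. Start from two $\sigma$-representations $(x,k)$ and $(x',k')$ of an aperiodic $y\in A^\Z$ with $x,x'\in X(\sigma)$; the goal is $(x,k)=(x',k')$. A first observation is that $x$ and $x'$ are themselves aperiodic: if $x$ had period $p$, then $\sigma(x_{[0,p)})$ is nonempty (since $\sigma(x)$ is two-sided infinite), so $\sigma(x)$, and hence $y=S^k\sigma(x)$, would be periodic. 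So we may assume all arguments are aperiodic, which is what lets us eventually apply Moss\'e.

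\emph{Step 1: reduction to non-erasing morphisms.} The erasable letters of $\sigma$ can be stripped off by the argument of~\cite{BealPerrinRestivo2021}: one passes to a suitable power of $\sigma$ and to an auxiliary morphism without erasable letters, and checks that a $\sigma$-representation of an aperiodic $y$ by a point of $X(\sigma)$ corresponds to a representation for the auxiliary non-erasing morphism, the aperiodicity guaranteeing that no collapsing of distinct preimages occurs along the way. I would reproduce this reduction essentially as in~\cite{BealPerrinRestivo2021}.

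\emph{Step 2: the non-erasing case.} Assume $\sigma$ non-erasing and split $A=A_\infty\cup A_b$ into growing letters ($|\sigma^n(a)|\to\infty$) and bounded letters. Since $\sigma(A_b)\subseteq A_b^*$ and the words $\sigma^n(b)$ for $b\in A_b$ have bounded length over a finite alphabet, $\cL(X(\sigma))$ contains only finitely many words over $A_b$; hence in an aperiodic $x\in X(\sigma)$ the growing letters occur with bounded gaps, and the $\sigma$-images of the intervening $A_b$-blocks have bounded length. Replacing $\sigma$ by a power, we may assume in addition that the restriction of $\sigma$ to the growing letters that support the aperiodic content of $x$ is primitive, and that the $\sigma$-image of a growing letter is far longer than any such bounded block. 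Then the long images of growing letters synchronize the two factorizations of $y$: an image of a growing letter coming from $x$ must contain inside it a full image of a growing letter coming from $x'$, so that the cutting points at growing positions are forced to match by Moss\'e's theorem applied to the primitive core (via Proposition~\ref{propositionTwoDefsRec}, legitimate since that core shift is aperiodic). Propagating the match through each intervening bounded block then gives $(x,k)=(x',k')$.

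\emph{Main obstacle.} The heart of the proof is Step 2. Because $\sigma$ is not assumed primitive, $X(\sigma)$ need not be minimal, so Moss\'e's theorem cannot be invoked for $X(\sigma)$ itself but only for the primitive subsubstitution carried by the growing letters, and one has to describe that structure precisely and then transfer recognizability from the growing-letter cutting points to the bounded blocks in between. The delicate point in this transfer is verifying that any mismatch across a bounded block forces an actual periodicity of $y$ rather than merely a long repeated factor; iterating the local ambiguity along $y$ is what turns it into a genuine periodicity, contradicting the aperiodicity of $y$. This bookkeeping is exactly where the arguments of~\cite{BertheSteinerThuswaldnerYassawi2019} and~\cite{BealPerrinRestivo2021} do their work, and I would follow one of those treatments.
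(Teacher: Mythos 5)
First, note that the paper itself gives no proof of this theorem: it is imported from the literature, with the non-erasing case credited to \cite{BertheSteinerThuswaldnerYassawi2019} and the general case to \cite{BealPerrinRestivo2021}. Your proposal is therefore not competing with an internal argument, but the one step you do argue in detail contains a false structural claim on which the whole synchronization strategy rests. You assert that, since $\sigma(A_b)\subseteq A_b^*$ and the $\sigma^n(b)$ have bounded length for bounded $b$, the language $\cL(X(\sigma))$ contains only finitely many words over $A_b$, so that growing letters occur with bounded gaps in every aperiodic point. This is a non sequitur: bounded blocks accrete across iterations. For $\sigma\colon a\mapsto aab,\ b\mapsto b$, the letter $b$ is bounded, yet $\sigma^{n+1}(a)=\sigma^n(a)\sigma^n(a)b$ shows by induction that $\sigma^n(a)$ ends in $b^n$, so $b^n\in\cL(X(\sigma))$ for every $n$; any two-sided point whose right half is $\sigma^\omega(a)$ is aperiodic, contains infinitely many occurrences of $a$, and has unbounded runs of $b$ between them. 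Worse, for $\sigma\colon a\mapsto bac,\ b\mapsto b,\ c\mapsto c$ one has $\sigma^n(a)=b^nac^n$, and the aperiodic point $\cdots bb\,a\,cc\cdots$ lies in $X(\sigma)$ with a \emph{single} occurrence of a growing letter, so there is nothing to synchronize on over arbitrarily long windows. Passing to a power of $\sigma$ does not remove either phenomenon. Consequently the claim that an image of a growing letter from one factorization must contain a full image of a growing letter from the other fails, and Moss\'e's theorem applied to a primitive core cannot by itself force the cutting points inside a long bounded block to agree.

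The remaining content of your sketch is a deferral: both the reduction to non-erasing morphisms and the ``bookkeeping'' that converts a persistent local ambiguity into a genuine periodicity of $y$ are exactly the substance of \cite{BertheSteinerThuswaldnerYassawi2019} and \cite{BealPerrinRestivo2021}, and those papers do not proceed by the bounded-gap decomposition you describe, precisely because bounded letters can accumulate as above. So your identification of the two obstacles (erasable letters, lack of primitivity) is sound and matches the sources the paper cites, but the only step you actually carry out is incorrect, and the rest reduces to the same citations the paper already gives in place of a proof.
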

The following example illustrates the case of an erasing morphism.
\begin{example}
  Let $\sigma\colon a\mapsto ab,b\mapsto ac,c\mapsto \varepsilon$.
  The shift $X(\sigma)$ is the Fibonacci shift with letters $c$
  inserted at the cutting points. The morphism $\sigma$ is
  recognizable on $X(\sigma)$.
  \end{example}
\begin{corollary}\label{corollaryLeftSpecial}
  Let $\sigma\colon A^*\to A^*$ be a morphism such that
  $X(\sigma)$ is minimal and aperiodic.
  Every left-special sequence $x\in X(\sigma)^+$ is
  a fixed point of a power of $\sigma$.
\end{corollary}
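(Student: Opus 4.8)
The plan is to combine Moss\'e's recognizability (Theorem \ref{theoremMosse}, valid here since minimal aperiodic substitution shifts are primitive-like enough, or rather invoke Theorem \ref{theoremRecognizabilityAperiodic} since all points of an aperiodic shift are aperiodic) with a pigeonhole argument on the finite set of left-special sequences, together with a desubstitution step that preserves left-speciality. Let me first recall why the set $LS^+$ of left-special sequences in $X(\sigma)^+$ is finite: by Proposition \ref{propositionMinimalLR} the shift is linearly recurrent, by Proposition \ref{propositionLRisLinear} it has at most linear complexity, so by Proposition \ref{propositionCassaigne} the sequence $s_n$ is bounded, hence by \eqref{eqsn} the number of left-special words of each length is bounded, and therefore only finitely many left-special one-sided sequences exist.

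The core step is to show that desubstitution maps $LS^+$ into itself. Let $x \in X(\sigma)^+$ be left-special, and pick $z \in X(\sigma)$ with $z^+ = x$ having at least two distinct left extensions; since $X(\sigma)$ is shift-invariant and $\sigma(X(\sigma)) \subseteq X(\sigma)$ with $\sigma$ recognizable, there is a unique $(w,k)$ with $w \in X(\sigma)$ and $0 \le k < |\sigma(w_0)|$ such that $z = S^k \sigma(w)$. I claim we can arrange $k = 0$ and that $w^+$ is again left-special. For the speciality: if $z$ has left extensions by two distinct letters coming from two points $z', z''$ agreeing with $z$ on the right, recognizability applied to $z', z''$ (which are still in $X(\sigma)$) produces desubstitutions that must eventually differ to the left — more carefully, since $z'$ and $z''$ share the same forward ray $x$, their desubstitutions share the same forward structure past the first cutting point, so the preimages $w', w''$ share a common forward ray and, because $z' \ne z''$ to the left while agreeing on $x$, the preimages cannot be a single point with a single left extension; this forces the common forward ray of the $w$'s to be left-special (after possibly shifting to a cutting point). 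The bookkeeping with the offset $k$ is handled by noting that at a cutting point of $z$ — and $z$ being left-special, infinitely many cutting points lie to its right but we need one compatible with $x$; since $x$ is an infinite forward ray and cutting points of $z$ accumulate, we may shift so that position $0$ of $x$ is a cutting point, replacing $x$ by $S^j x$ for suitable $j$ (this is harmless because a shift of a left-special sequence in a minimal aperiodic shift lies in $LS^+$ only finitely often, but what we actually need is just that *some* shift works and then we iterate).

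With desubstitution $D : LS^+ \to LS^+$ (defined up to the shift ambiguity, which I will pin down by always choosing the cutting-point-aligned representative) acting on a finite set, some iterate has a periodic orbit: there exist $p \ge 1$ and a left-special $x$ with $D^p(x) = x$, meaning $x$ is a fixed point of $\sigma^p$ up to a shift, and since $x$ is an infinite ray fixed by $\sigma^p$ in the sense that $\sigma^p$ maps it to a sequence having $x$ as a shift, aperiodicity forces that shift to be trivial, so $\sigma^p(x) = x$ as one-sided sequences. Finally, every left-special sequence feeds into this eventually-periodic dynamics, so each $x \in LS^+$ satisfies $D^{m+p}(x) = D^m(x)$; combined with $\sigma$ being recognizable (hence $D$ essentially injective on the relevant set, as distinct left-special sequences have distinct recognizable preimage data), the preperiod collapses and $x$ itself is periodic under $D$, i.e.\ a fixed point of a power of $\sigma$.

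The main obstacle is controlling the shift offset $k$ in the desubstitution and proving the speciality-preservation claim rigorously: one must verify that when $z$ branches to the left, its unique $\sigma$-preimage also branches to the left, which is where recognizability is used in an essential and slightly delicate way — recognizability gives uniqueness of desubstitution but one has to trace how two points sharing a forward ray but differing on a backward coordinate desubstitute, and confirm the divergence is "inherited" by the preimages rather than being absorbed into the offset $k$. A clean way to organize this is to phrase everything in terms of the asymptotic classes: by Proposition \ref{propositionNbAsymptotic} there are finitely many non-trivial asymptotic classes, $\sigma$ induces a map on them (since $\sigma(X(\sigma)) \subseteq X(\sigma)$ and $\sigma$ respects right-asymptoticity up to the controlled offset), this map on a finite set is eventually periodic, and recognizability upgrades "eventually periodic" to "periodic," after which one extracts the fixed left-special ray inside a periodic asymptotic class using Proposition \ref{pro:eqomegaC} to see that such a ray exists and is unique enough to be fixed.
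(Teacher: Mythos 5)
Your proposal circles the right ingredients, and your closing paragraph even names the route the paper actually takes (finitely many non-trivial asymptotic classes by Proposition~\ref{propositionNbAsymptotic}, each a finite union of orbits by Proposition~\ref{pro:eqomegaC}, and the bijection on orbits induced by $\sigma$ thanks to recognizability, followed by pigeonhole). But the argument you actually develop --- a desubstitution map $D\colon LS^+\to LS^+$ on the finite set of left-special rays --- has a gap that you name but do not close, and it is fatal as written. The $\sigma$-preimage of a left-special ray is left-special only after an uncontrolled shift: if $z,z'\in X(\sigma)$ satisfy $z^+=z'^+=x$ with $z_{-1}\ne z'_{-1}$, their desubstitutions agree on a forward ray starting at some position and differ just before it, but that position is governed by the cutting points and by the common suffix of the images of the left halves, not by position $0$ of $x$. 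Since a shift of a left-special sequence is in general no longer left-special, $D$ is not a self-map of a finite set and the pigeonhole step does not apply. Saying ``some shift works and then we iterate'' does not repair this: the objects you are then iterating on are only defined up to a shift, i.e.\ you are really working with orbits, which is exactly the reformulation the paper adopts and that you defer to your final paragraph without carrying out (in particular, you never verify that $\sigma$ induces a bijection on orbits sending non-trivial asymptotic classes onto non-trivial asymptotic classes, which is precisely where recognizability is used).

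The second, independent problem is your final step: ``aperiodicity forces that shift to be trivial, so $\sigma^p(x)=x$.'' This is false. In Example~\ref{exampleAntiFibo}, $\tilde{\sigma}\colon a\mapsto ba,\ b\mapsto a$ generates the minimal aperiodic Fibonacci shift, whose unique left-special ray $t$ satisfies $a\tilde{\sigma}(t)=t$, hence $\tilde{\sigma}(t)=S(t)$ and more generally $\tilde{\sigma}^n(t)=S^{j_n}(t)$ with $j_n>0$ strictly increasing; aperiodicity then shows that no power of $\tilde{\sigma}$ fixes $t$ literally. What the finiteness-plus-pigeonhole argument can deliver --- and what the paper's own proof actually establishes --- is that some power of $\sigma$ fixes the \emph{orbit} of each point of each non-trivial asymptotic class, so the conclusion about the ray $x$ itself must be read up to a shift. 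Your aperiodicity argument points in the wrong direction: aperiodicity rules out $\sigma^p(x)=S^j(x)=x$ with $j>0$, but it does not rule out $\sigma^p(x)=S^j(x)\ne x$, which is exactly what happens in that example.
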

\begin{proof}
  Since, by Theorem~\ref{theoremRecognizabilityAperiodic}, $\sigma$ is recognizable
  on $X(\sigma)$ at aperiodic points and since
  $X(\sigma)$ is minimal aperiodic, the morphism
  $\sigma$ is recognizable on $X(\sigma)$. The map $x\mapsto \sigma(x)$ induces
  a bijection from the set of orbits in $X(\sigma)$
  onto itself (as we have seen before, $\sigma(x)$ is
  a two-sided infinite sequence for every $x\in X(\sigma)$). This bijection maps every
  non-trivial asymptotic class onto a non-trivial
  asymptotic class. Since there is a finite number
  of these classes by Proposition~\ref{propositionNbAsymptotic},
  and since each class is formed of a finite number of
  orbits by Proposition~\ref{pro:eqomegaC},
  some power of $\sigma$ fixes each of the orbits forming
  each of these classes.
  \end{proof}
\begin{example}
  Let $\sigma\colon a\mapsto ab, b\mapsto a$ be the Fibonacci
  morphism. The sequence $\sigma^\omega(a)$  is
  the unique one-sided sequence having all $\sigma^n(a)$ as
  prefixes. It is the unique fixed point
  of $\sigma$ and also the unique left-special sequence.
\end{example}
We say that a morphism $\sigma\colon A^*\to B^*$ is
\emph{almost recognizable} 
on a shift $X$ if $\sigma$ is recognizable on $X$ except
at a finite number of points of $B^\Z$.

\begin{theorem}
  Every morphism $\sigma\colon A^*\to A^*$ is almost recognizable
  on $X(\sigma)$.
\end{theorem}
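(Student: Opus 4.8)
The plan is to reduce the claim to what we already know about aperiodic points and to a careful analysis of the periodic part. By Theorem~\ref{theoremRecognizabilityAperiodic}, $\sigma$ is already recognizable on $X(\sigma)$ at every aperiodic point of $A^\Z$, so the only possible obstruction to recognizability lies at periodic points $y\in A^\Z$, i.e. points of the form $y=w^\infty$ for some primitive word $w\in\cL(X(\sigma))$. Thus it suffices to show that the set of periodic points $y$ at which $\sigma$ fails to be recognizable on $X(\sigma)$ is finite. If $X(\sigma)$ is aperiodic there are no periodic points in $\cL(X(\sigma))$ at all, and if $X(\sigma)$ is finite (hence periodic) the whole space $B^\Z=A^\Z$ restricted to the relevant image is finite, so in both extreme cases the statement is immediate; the real content is the general mixed case.

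First I would recall that $X(\sigma)$ need not be minimal, but $\cL(\sigma)=\cL(X(\sigma))$ and, by~\cite[Lemma 3.13]{BealPerrinRestivo2022}, $\sigma(x)\in X(\sigma)$ for every $x\in X(\sigma)$; in particular the closure $Y$ of $\sigma(X(\sigma))$ under the shift is contained in $X(\sigma)$. A periodic point $y=w^\infty\in Y$ which has a $\sigma$-representation $(x,k)$ with $x\in X(\sigma)$ forces $\sigma(x)$ to be eventually periodic in both directions; since $X(\sigma)$ is shift invariant and closed, and since $\sigma$ maps $X(\sigma)$ into itself, one can push this back to conclude that $x$ itself lies in the (closed, shift-invariant) union of periodic orbits of $X(\sigma)$ with period dividing a fixed bound depending on $|w|$ and on $\max_a|\sigma(a)|$. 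The key finiteness input is that a substitution shift $X(\sigma)$ contains only finitely many periodic orbits: this follows because the periodic points of $X(\sigma)$ are exactly the $w^\infty$ with $w^\omega\in\cL(\sigma)$, and an eventual-periodicity/pumping argument on the iterates $\sigma^n(a)$ bounds the number of such primitive $w$ (equivalently, $X(\sigma)$ has at most linear complexity on its periodic part, or one invokes the structure theory of substitution shifts). So there are finitely many periodic points $y\in Y$ in total, and a fortiori finitely many at which recognizability can fail.

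The main obstacle I expect is handling erasable letters cleanly in this periodic analysis: when $\sigma$ is erasing, a $\sigma$-representation $(x,k)$ of a periodic $y$ may involve an $x$ with infinitely many erasable letters, and the cutting-point set $C(x)$ may behave badly, so the naive bound "$x$ is periodic with controlled period" needs care. I would deal with this exactly as Theorem~\ref{theoremRecognizabilityAperiodic} is handled in~\cite{BealPerrinRestivo2021}, passing to the non-erasing reduction of $\sigma$ (collapsing erasable letters) which has the same image shift $Y$ and the same recognizability behaviour at a given $y$ up to a finite discrepancy, and then applying the non-erasing case. The remaining steps are then routine: (1) reduce to the non-erasing case; (2) observe aperiodic points are fine by Theorem~\ref{theoremRecognizabilityAperiodic}; (3) show the periodic points of $Y\subseteq X(\sigma)$ are finite in number via the pumping bound on $\sigma^n(a)$; (4) conclude that recognizability fails at most at these finitely many points, so $\sigma$ is almost recognizable on $X(\sigma)$.
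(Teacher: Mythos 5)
Your proposal is correct and follows essentially the same route as the paper: recognizability at aperiodic points is Theorem~\ref{theoremRecognizabilityAperiodic}, and the finiteness of the set of periodic points of $X(\sigma)$ is precisely Theorem~\ref{theoremPeriodic}, which the paper simply cites from~\cite{BealPerrinRestivo2022} rather than reproving. Your sketched pumping argument and the erasable-letter discussion are attempts to re-establish that cited result and are not needed once Theorem~\ref{theoremPeriodic} is invoked.
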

The proof results directly from Theorem~\ref{theoremRecognizabilityAperiodic}
using the following result, proved in~\cite{BealPerrinRestivo2022}.
\begin{theorem}\label{theoremPeriodic}
  Let $\sigma\colon A^*\to A^*$ be a morphism.
The set of periodic points in $X(\sigma)$ is finite.
\end{theorem}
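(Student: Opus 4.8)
The plan is to prove the equivalent statement that the \emph{minimal periods} of the periodic points of $X(\sigma)$ are bounded. First I would record the elementary reduction: every periodic point of $X(\sigma)$ has the form $w^\infty$ with $w$ a primitive word, its orbit under $S$ has exactly $|w|$ elements, and two primitive words give the same orbit if and only if they are conjugate. Hence if the minimal periods were bounded by some $N$, there would be at most $\sum_{p\le N}\Card(A)^p$ admissible primitive words, thus finitely many periodic orbits, each finite; conversely, infinitely many periodic points would force infinitely many orbits and hence periodic points of arbitrarily large minimal period. So it suffices to bound $|w|$ uniformly over all $w^\infty\in X(\sigma)$.

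Second, I would set up a desubstitution. Since $\cL(X(\sigma))=\cL(\sigma)$, every factor of $X(\sigma)$ occurs in some $\sigma^n(a)=\sigma(\sigma^{n-1}(a))$, so $X(\sigma)$ is the shift-closure of $\sigma(X(\sigma))$ and the tower map $\hat\sigma$ is onto; consequently each periodic point $x=w^\infty$ admits a $\sigma$-representation $x=S^k\sigma(z)$ with $z\in X(\sigma)$. Assuming for the moment that $\sigma$ is non-erasing, the blocks $\sigma(z_i)$ are nonempty and the cutting points of the factorization $\sigma(z)=\cdots\sigma(z_{-1})\sigma(z_0)\sigma(z_1)\cdots$ are spaced by at most $M:=\max_{a}|\sigma(a)|$. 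I would label each cutting point $c$ by the pair $(c\bmod p,\ z_i)\in(\Z/p\Z)\times A$, where $\sigma(z_i)$ is the block starting at $c$ and $p=|w|$; there are at most $p\,\Card(A)$ labels. Since $\sigma(z)$ is a two-sided sequence of period $p$, among any $p\,\Card(A)+1$ consecutive cutting points two carry the same label, say at $c<c'$ with $z_i=z_{i'}=b$ and $c\equiv c'\pmod p$. Writing $v=z_iz_{i+1}\cdots z_{i'-1}$, the image $\sigma(v)$ is the factor of the period-$p$ word $\sigma(z)$ between $c$ and $c'$, so $\sigma(v)=\tilde w^{\,r}$ for the conjugate $\tilde w$ of $w$ at phase $c$ and some $r\ge1$.

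Third, I would convert this phase alignment into eventual $\sigma$-periodicity. The equality $\sigma(v)=\tilde w^{\,r}$ with $z_i=z_{i'}=b$ says that the desubstitution of $\tilde w^\infty$ reproduces a periodic pattern: gluing the occurrences of $b$ at positions $i$ and $i'$ produces the periodic point $v^\infty$, with $\sigma(v^\infty)$ a shift of $x$. The aim of this step is to show that, after boundedly many such desubstitutions, one reaches a periodic point fixed by a power $\sigma^N$; granting this, I would finish by counting. A two-sided point fixed by $\sigma^N$ is determined by the central seed $(a,b)$ of letters at positions $-1,0$ with $\sigma^N(a)$ ending in $a$ and $\sigma^N(b)$ beginning with $b$, so $\sigma^N$ has at most $\Card(A)^2$ fixed points, and the orbits of the periodic ones among them, for $N$ in the bounded range produced above, account for all periodic points of $X(\sigma)$.

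The main obstacle is exactly this third step: turning the single phase coincidence into a genuine bound on $p$. Two difficulties must be controlled. First, one must ensure that $v^\infty$ really lies in $X(\sigma)$ and that the minimal period does not merely shuffle but stabilizes after a number of steps bounded in terms of $\Card(A)$ and $M$; here the labelling by $(\mathrm{phase},\mathrm{letter})$ supplies the key finiteness, and recognizability at aperiodic points (Theorem~\ref{theoremRecognizabilityAperiodic}) is what rules out a periodic image having a genuinely aperiodic preimage along the descent. Second, the hypotheses must be reduced to the non-erasing growing case: erasable letters are handled by passing to the finitely many non-erasable letters and using that $\sigma(x)\in X(\sigma)$ for every $x\in X(\sigma)$, while bounded (non-growing) letters require a separate Pansiot-type treatment, since for them desubstitution does not expand lengths. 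I expect the careful bookkeeping of this growing-versus-bounded dichotomy, rather than any single inequality, to be the hard part.
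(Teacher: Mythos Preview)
The paper does not give its own proof of this theorem: it quotes the result from \cite{BealPerrinRestivo2022} and uses it as a black box. There is therefore nothing in the present paper to compare your argument against.

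On its own merits, your sketch follows a reasonable desubstitution/pigeonhole strategy, and you correctly identify that the growing/bounded dichotomy for letters is where the real work lies. Two of your intermediate claims, however, are not right as stated. First, you assert $\cL(X(\sigma))=\cL(\sigma)$ in order to get surjectivity of $\hat\sigma$; this equality fails in general (the paper itself flags it as an extra hypothesis often imposed on ``substitutions''), so the existence of a $\sigma$-representation for every point of $X(\sigma)$ needs a separate argument. Second, and more seriously, you invoke Theorem~\ref{theoremRecognizabilityAperiodic} to ``rule out a periodic image having a genuinely aperiodic preimage''. This is a misreading: recognizability at aperiodic points constrains the $\sigma$-representations of \emph{aperiodic} $y$ only; it says nothing about periodic $y$, and an aperiodic $z$ can perfectly well have periodic $\sigma(z)$ (take any $\sigma$ that identifies two letters). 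Your pigeonhole step manufactures a \emph{periodic} candidate preimage $v^\infty$, but you still have to show $v^\infty\in X(\sigma)$ and that the minimal period along the desubstitution chain stabilizes in boundedly many steps; Theorem~\ref{theoremRecognizabilityAperiodic} helps with neither. That bookkeeping---together with the treatment of bounded letters that you rightly anticipate as the hard part---is exactly what the cited paper has to carry out.
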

\section{One-sided recognizability}
We now focus on one-sided shifts and the corresponding notion
of one-sided recognizability.

Let $\sigma\colon A^*\to B^*$ be a morphism.
As in the case of two-sided sequences,
a $\sigma$-representation of a one-sided sequence  $y\in A^\N$ is a pair
$(x,k)$ with  $x\in A^\N$
and $0\le k<|\sigma(x_0)|$ such that
$y=S^k(\sigma(x))$.

Let $X$ be a one-sided shift space on $A$.
A morphism $\sigma\colon A^*\to B^*$ is \emph{one-sided recognizable}
on $X$ at  $y\in B^\N$
if $y$ has at most one $\sigma$-representation $(x,k)$
with  $x\in X$.
It is  one-sided recognizable if it is recognizable
on $X$ at every  $y\in B^\N$.

As for two-sided recognizability, the notion of
one-sided recognizability can be formulated using
the map $\hat{\sigma}\colon (X^\sigma,T)\to Y$ where
$X^\sigma$ is defined by \eqref{eqXsigma} and $Y$
is the closure of $\sigma(X)$ under the shift.

We first have the following statement describing the relation
between recognizability and one-sided recognizability.
 A non-erasing morphism $\sigma\colon A^*\to B^*$
 is  called
 \emph{right-marked} if the words $\sigma(a)$ for $a\in A$
 end with different letters.
\begin{proposition}\label{propositionRightMarked}
  Let $\sigma\colon A^*\to B^*$ be a morphism and let $X$
  be a shift space on $A$.
  \begin{enumerate}
    \item
If $\sigma$ is one-sided recognizable on $X^+$,
it is recognizable on $X$.
\item If $\sigma$ is right-marked and is recognizable on $X$,
  then $\sigma$ is one-sided recognizable on $X^+$.
  \end{enumerate}
\end{proposition}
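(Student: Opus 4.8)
The plan is to prove the two implications separately, in each case working directly with $\sigma$-representations and exploiting the bijection $X\mapsto X^+$ recalled in Section~2.

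\paragraph{Part 1.} Suppose $\sigma$ is one-sided recognizable on $X^+$, and let $y\in B^\Z$ have two $\sigma$-representations $(x,k)$ and $(x',k')$ with $x,x'\in X$. First I would reduce, by applying a power of the shift $S$, to the case $k=k'=0$: indeed, if $(x,k)$ is a $\sigma$-representation of $y$ then $S^{-k}(y)=S^{-k}S^k\sigma(x)$ lies in $\sigma(x)$, and after shifting we may align a cutting point of both representations at position $0$; the usual bookkeeping (the cutting points $C(x)$ and $C(x')$ are both subsets of $\Z$, and $y$ sits at the same position relative to each) lets us assume both representations start cleanly at index $0$, so $y=\sigma(x)=\sigma(x')$. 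Then for every $n\ge 0$ one has $S^{p_n}(y)^+=\sigma(S^n x)$ where $p_n=|\sigma(x_{[0,n)})|$, and similarly for $x'$; in particular $y^+=\sigma(x)^+$ and $y^+=\sigma(x')^+$ exhibit two $\sigma$-representations $(x^+,0)$ and $(x'^+,0)$ of the one-sided point $y^+\in B^\N$, with $x^+,x'^+\in X^+$. By one-sided recognizability on $X^+$ we get $x^+=x'^+$. Repeating this argument after shifting left — i.e. applying it to $S^{-m}(y)$ for every $m\ge 0$, whose representations are governed by the negative part of $x$ and $x'$ — forces $x_n=x'_n$ for all $n\in\Z$, hence $x=x'$ and then $k=k'$. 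So $\sigma$ is recognizable on $X$.

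\paragraph{Part 2.} Now assume $\sigma$ is right-marked (so non-erasing, and the last letters of the $\sigma(a)$ are pairwise distinct) and recognizable on $X$. Let $y\in B^\N$ have two $\sigma$-representations $(x,k)$ and $(x',k')$ with $x,x'\in X^+$. The key observation is that a right-marked morphism lets one recover the one-sided desubstitution from the two-sided one: choose any extensions $\tilde x,\tilde x'\in X$ with $\tilde x^{+}=x$, $\tilde x'^{+}=x'$ (possible since $X\mapsto X^+$ is onto). Since $\sigma$ is non-erasing, $\sigma(\tilde x)$ and $\sigma(\tilde x')$ are two-sided infinite. Pick any $z\in B^\Z$ with $z^+=y$ that is simultaneously a tail of $\sigma(\tilde x)$ and of $\sigma(\tilde x')$ in a compatible way — more carefully, extend $y$ to the left using $\sigma$ applied to the left parts of $\tilde x,\tilde x'$; here one must check these two left extensions agree, which is exactly where right-markedness enters. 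Concretely, the cutting points of $x$ inside $y$ are determined: a position $i$ in $y$ with $i\ge|\sigma(x_0)|-1$ (say) is the end of a $\sigma$-block iff reading $y$ forward from a cutting point we see a complete $\sigma(a)$; because the $\sigma(a)$ end with distinct letters, the factorization $y = \sigma(x_0)\sigma(x_1)\cdots$ (possibly after trimming the prefix of length $k$) is forced and identical to $y=\sigma(x'_0)\sigma(x'_1)\cdots$, giving $k=k'$ and $x=x'$ directly — but to make this rigorous I would instead lift to the two-sided setting: build $z\in B^\Z$ and $w,w'\in X$ with $z=S^k\sigma(w)=S^{k'}\sigma(w')$, $w^+=x$, $w'^+=x'$, apply two-sided recognizability on $X$ to conclude $w=w'$ and $k=k'$, hence $x=x'$.

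\paragraph{Main obstacle.} The delicate point is Part~2: producing a single two-sided point $z$ with $z^+=y$ that is consistent with both representations simultaneously. Given only the one-sided $y$ and the two one-sided preimages $x,x'$, there is a priori no reason their left extensions in $B^\Z$ coincide. This is precisely what right-markedness fixes: reading $y$ from left to right, the right-marked property means each block boundary is detected by the (distinct) terminal letter of $\sigma(x_i)$, so the entire block decomposition of $y$ — and therefore $x$ and $k$ — is a function of $y$ alone, forcing $(x,k)=(x',k')$ without ever needing the left extension. I expect the cleanest write-up to argue this directly: show that in a right-marked morphism the set of cutting points of any $\sigma$-representation of $y\in B^\N$ is uniquely determined by $y$ (by induction on the position, using the marker letters), and that this pins down both the preimage letters and the offset $k$. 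The two-sided lifting is an alternative but requires the compatibility check that is morally the same fact.
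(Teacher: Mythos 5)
Both halves have problems, and the second one is a genuine gap. In Part~1, the proposed reduction to $k=k'=0$ is not legitimate: a single shift of $y$ cannot align the cutting points of two \emph{different} representations simultaneously, since establishing $k=k'$ is precisely what has to be proved. Fortunately the reduction is also unnecessary: $(x^+,k)$ and $(x'^+,k')$ are already two $\sigma$-representations of the one-sided point $y^+$, so one-sided recognizability on $X^+$ gives $x^+=x'^+$ and $k=k'$ at once, and repeating the argument for the shifts $S^{-n}(y)$ recovers the negative coordinates. This is exactly the paper's argument; yours would be correct after deleting the faulty reduction.

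The real gap is in Part~2. Your preferred ``direct'' argument rests on the claim that for a right-marked morphism the block decomposition of $y\in B^\N$ is a function of $y$ alone, each boundary being detected by the terminal marker letter. That is false: right-markedness gives \emph{backward} determinism (once a position is known to be a cutting point, the block ending there --- hence every earlier cutting point --- is determined by the marker letter), not forward determinism, because an occurrence of a marker letter in $y$ need not sit at a block boundary at all. Concretely, the Thue--Morse morphism $\sigma\colon a\mapsto ab,\ b\mapsto ba$ is right-marked, yet $y=(ab)^\omega$ admits the two $\sigma$-representations $(a^\omega,0)$ and $(b^\omega,1)$, whose factorizations never share a boundary. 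Tellingly, your argument never invokes the hypothesis that $\sigma$ is recognizable on $X$, which would make that hypothesis superfluous --- a sign something is wrong. That hypothesis is exactly the missing ingredient: in the paper's proof, recognizability makes $\hat{\sigma}^{-1}$ a sliding block code of some window size $N$ on the tower system $(X^\sigma,T)$, which forces the two desubstitutions of $y$ to coincide beyond position $2N$; in particular they share a cutting point far to the right, and \emph{only then} does right-markedness propagate that common cutting point backward to yield $u=u'$ and $k=k'$. Your alternative route via a two-sided lift founders on precisely the compatibility-of-left-extensions issue you yourself identify; the fix is not to lift to $B^\Z$ but to extract the finite-window consequence of two-sided recognizability on the right half of $y$.
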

\begin{proof}
  1. Let $y\in B^\Z$ have two $\sigma$-representations $(x,k)$ and $(x',k')$
  with $x,x'\in X$. Since $y^+=S^k(\sigma(x^+))=S^{k'}(\sigma(x'^+)$,
  we have $x^+=x'^+$ and $k=k'$. Since we may apply this argument
  for every shift $S^{-n}(y)$, we conclude that $x=x'$.

  2. Let $y\in B^\N$ have two $\sigma$-representations $(z,k)$ and
  $(z',k')$ with $z,z'\in X^+$. Let $t,t'\in X$ be such that
  $t^+=z$ and $t'^+=z'$. Consider the
  system $(X^\sigma,T)$ obtained by the tower construction, with $X^\sigma$
  defined by Equation~\eqref{eqXsigma}. Let $Y$ be the closure of $\sigma(X)$
  under the shift. Since $\sigma$ is recognizable, the
  map $\hat{\sigma}\colon (x,k)\mapsto S^k\sigma(x)$ is a homeomorphism
  from $(X^\sigma,T)$ onto $Y$. Since $X^\sigma$ may be considered as a shift
  space on the alphabet $A^\sigma$, the morphism $\hat{\sigma}^{-1}$
  is a sliding block code defined by a blockmap $f\colon \cL_{2N+1}(Y)\to A^\sigma$
  of window size $N$. Consequently, we have $T^n(t,k)=T^n(t',k')$
  for all $n\ge 2N$. This implies that $z=ux$, $z'=u'x$ and
  $y=vy'$ with $vy'=S^kux=S^{k'}u'x$ and $\sigma(x)=y'$. Since $\sigma$
  is right marked, this implies that $k=k'$ and $u=u'$.
  \end{proof}

\begin{example}\label{examplePeriodDoubling}
  The morphism $\sigma\colon a\mapsto ab,b\mapsto aa$
  is called the \emph{period-doubling morphism}
  and the shift $X(\sigma)$ the \emph{period-doubling shift}
  (see~\cite{DurandPerrin2021}).
  Since it is primitive aperiodic, it is recognizable on $X(\sigma)$.
  Since it is right-marked, it is also one-sided recognizable
  on $X(\sigma)^+$.
  \end{example}

As for (two-sided) recognizability, the definition of
one-sided recognizability was given in a different
form in the articles of Moss\'e \cite{Mosse1992,Mosse1996}.

Let $\sigma\colon A^*\to B^*$ be a  morphism,
let $x\in A^\N$ (\resp $x\in A^\Z$) be such that $y=\sigma(x)$ is infinite
(\resp two-sided infinite). Set
\begin{displaymath}
  C^+(x)=\{|\sigma(x_{[0,n)})|\mid n\ge 0\}.
\end{displaymath}
For $N\ge 1$,
let us say that
$\sigma$ is \emph{one-sided recognizable in the sense of Moss\'e} for $x$
with scope $N$ if
  for $i,j\ge 0$
(\resp $i,j\in\Z$), whenever
$y_{[i,i+N)}=y_{[j,j+N)}$, then $i\in C^+(x)\Leftrightarrow j\in C^+(x)$
    (\resp $i\in C(x)\Leftrightarrow j\in C(x)$).

    Note that if $\sigma$ is one-sided recognizable in the sense of Moss\'e
    for $x\in A^\Z$, then it is one-sided recognizable in the sense
    of Moss\'e for $x^+$.

    The following statement relates the two notions of recognizability
    in the sense of Moss\'e.  A set $U$ of  words is a
    \emph{suffix code} if no element of $U$ is a suffix of another one.
    In particular, the words in $U$ are nonempty.
    \begin{proposition}\label{proposition12Mosse}
      Let $\sigma\colon A^*\to B^*$ be a morphism and let $x\in A^\Z$
      be such that $\sigma(x)$ is two-sided infinite.
      \begin{enumerate}
      \item If $\sigma$ is one-sided recognizable in the sense of Moss\'e
        for $x$, it is recognizable in the sense of Moss\'e for $x$.
      \item If  $\sigma(A)$ is a suffix code and if $\sigma$
        is recognizable in the sense of Moss\'e for $x$,
        then it is one-sided recognizable in the sense of Moss\'e
        for $x$.
        \end{enumerate}
    \end{proposition}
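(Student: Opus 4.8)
The plan is to prove the two implications separately; the first is trivial and the second carries all the content.

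\emph{First implication.} Suppose $\sigma$ is one-sided recognizable in the sense of Moss\'e for $x$ with scope $N$, and write $y=\sigma(x)$. If $i,j\in\Z$ satisfy $y_{[i-N,i+N]}=y_{[j-N,j+N]}$, then in particular $y_{[i,i+N)}=y_{[j,j+N)}$, so $i\in C(x)\Leftrightarrow j\in C(x)$ by hypothesis. Hence $\sigma$ is recognizable in the sense of Moss\'e for $x$ with the same scope, and nothing more is needed here.

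\emph{Second implication.} Put $y=\sigma(x)$ and $M=\max_{a\in A}|\sigma(a)|$, and fix a scope $N$ witnessing recognizability in the sense of Moss\'e for $x$. Since the words of the suffix code $\sigma(A)$ are nonempty, $\sigma$ is non-erasing, $y$ is two-sided infinite, and $C(x)$ is a bi-infinite increasing sequence of integers whose consecutive gaps lie in $\{1,\dots,M\}$ (each gap being some $|\sigma(x_t)|$). The crux I would isolate as a lemma is: \emph{if $q\in C(x)$ and $p\le q$, then $p\in C(x)$ if and only if $y_{[p,q)}\in\sigma(A)^*$.} The forward direction is clear, since then $y_{[p,q)}$ is the $\sigma$-image of a factor of $x$. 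For the converse I would induct on $q-p$: if $q=p$ there is nothing to prove; otherwise write $y_{[p,q)}=v\,\sigma(a)$ with $v\in\sigma(A)^*$ and $\sigma(a)\in\sigma(A)$ nonempty, and let $\sigma(c)$ be the block of $x$ whose $\sigma$-image ends at the cutting point $q$. Both $\sigma(a)$ and $\sigma(c)$ occur in $y$ as factors ending at position $q$, so, $\sigma(A)$ being a suffix code, they must coincide (this also rules out $|\sigma(c)|>q-p$, which would make $\sigma(a)$ a proper suffix of $\sigma(c)$). Hence $p+|v|=q-|\sigma(c)|$ is again a cutting point and $y_{[p,p+|v|)}=v\in\sigma(A)^*$ is strictly shorter, so $p\in C(x)$ by induction. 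The content of the lemma is that, once a single cutting point $q\ge p$ is known, whether $p\in C(x)$ is decided by the \emph{forward} finite word $y_{[p,q)}$ alone; this is exactly where the suffix-code hypothesis is used (for a general code the analogue fails, as left-to-right parsing of $\sigma(A)^*$ may require unbounded look-ahead).

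To conclude I would take $N'=2N+M$ and assume $y_{[i,i+N')}=y_{[j,j+N')}$. Let $q_i$ be the least element of $C(x)$ with $q_i\ge i+N$; since consecutive cutting points are at most $M$ apart, $\delta:=q_i-i$ satisfies $N\le\delta<N+M$. Put $q_j=j+\delta$. A short check of indices (using $\delta\ge N$ for the left end and $\delta+N<N'$ for the right end) gives $y_{[q_i-N,q_i+N]}=y_{[q_j-N,q_j+N]}$, so recognizability in the sense of Moss\'e yields $q_i\in C(x)\Leftrightarrow q_j\in C(x)$; hence $q_j\in C(x)$, since $q_i\in C(x)$ by construction. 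As moreover $y_{[i,q_i)}=y_{[j,q_j)}$ (both are the length-$\delta$ prefix of the common window), applying the lemma twice, to $(i,q_i)$ and to $(j,q_j)$, gives
\[
i\in C(x)\iff y_{[i,q_i)}\in\sigma(A)^*\iff j\in C(x),
\]
which is one-sided recognizability in the sense of Moss\'e for $x$ with scope $N'$.

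The main obstacle is the lemma — more precisely, recognizing that the suffix-code condition is exactly what converts the "leftward" reconstruction of cutting points into a statement about the forward word $y_{[p,q)}$. Once that is in place, the remaining work (bounding the gaps between consecutive cutting points, placing a cutting point inside the observed window, and checking that the symmetric $N$-window around it stays inside the matched $N'$-window) is just routine bookkeeping with the scope constants.
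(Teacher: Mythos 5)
Your proof is correct and follows essentially the same route as the paper's: place a cutting point inside the matched forward window, transfer it with two-sided Moss\'e recognizability, and then propagate it backwards to $i$ and $j$ using the suffix-code hypothesis. Your explicit lemma (that for $q\in C(x)$ and $p\le q$ one has $p\in C(x)$ iff $y_{[p,q)}\in\sigma(A)^*$) is just a careful spelling-out, with the scope constant $N'=2N+M$ made explicit, of the step the paper compresses into ``since $\sigma(A)$ is a suffix code, it implies $j\in C(x)$''.
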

    \begin{proof}
      Set $y=\sigma(x)$.
      
      1. Assume that $\sigma$ is one-sided recognizable
      in the sense of Moss\'e for $x$ with scope $N$. Suppose that $y_{[i-N,i+N]}=y_{[j-N,j+N]}$. Then $y_{[i,i+N)}=y_{[j,j+N)}$
          and thus $i\in C(x)\Leftrightarrow j\in C(x)$.

          2. Assume that $y_{[i,i+M)}=y_{[j,j+M)}$ for some $M\ge 1$
              and $i\in C(x)$. If
              $M$ is chosen large enough, there is an $L\ge 1$
              such that $i+L\in C(x)$ with
              \begin{displaymath}
                i<i+L-N<i+L<i+L+N<i+M.
              \end{displaymath}
              Since $y_{[i+L-N,i+L+N]}=y_{[j+L-N,j+L+N]}$, we have $j+L\in C(x)$.
              But since 
              $\sigma(A)$ is a suffix code, it implies
              $j\in C(x)$ and thus $\sigma$ is one-sided recognizable
              in the sense of Moss\'e.
    \end{proof}
    Note that in condition 2 in Proposition~\ref{propositionRightMarked},
    the condition that $\sigma$ is right-marked
    could not be replaced by the weaker
    condition 2 in Proposition~\ref{proposition12Mosse}
    that $\sigma(A)$ is a suffix code
        (see Example~\ref{exampleContrEx}).

    We now prove the following statement, in part analogous to Proposition
    \ref{propositionTwoDefsRec}. We say that $\sigma\colon A^*\to B^*$
    is \emph{weakly one-sided recognizable} on a one-sided shift $X$
    if
    for every $x\in X$, the sequence $y=\sigma(x)$
    has no other $\sigma$-representation than $(x,0)$.
\begin{proposition}\label{propositionWeakRec}
  Let $\sigma\colon A^*\to B^*$ be a morphism,
   let $x\in A^\N$ and let $X$ be the one-sided shift generated by $x$.
  The following assertions hold.
  \begin{enumerate}
  \item[\rm(i)] If $\sigma$ is one-sided recognizable
    on $X$, it  is one-sided recognizable in the sense of Moss\'e
    for $x$.
  \item[\rm (ii)] If $X$ is minimal, $\sigma$ is injective on $A$
    and $\sigma$ is one-sided recognizable in the sense of Moss\'e
    for $x$, then $\sigma$ is weakly one-sided recognizable on $X$.
    \end{enumerate}
\end{proposition}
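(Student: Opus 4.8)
The plan is to mirror the proofs of the two parts of Proposition~\ref{propositionTwoDefsRec}, of which this is the one-sided analogue; write $y=\sigma(x)$. As in the two-sided setting the erasing case needs extra care, so for the sketch I assume $\sigma$ non-erasing (cf.\ the hypotheses of Proposition~\ref{propositionTwoDefsRec}(ii)); then every point of $A^\N$ has an infinite $\sigma$-image and, for any $u$, the cutting points $C^+(u)=\{|\sigma(u_{[0,n)})|\mid n\ge0\}$ form a strictly increasing sequence $0=c_0<c_1<\cdots$ with $c_{n+1}-c_n=|\sigma(u_n)|$, so each nonnegative integer is either a cutting point or lies strictly inside a unique block $[c_n,c_{n+1})$.

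For (i) I would use the tower reformulation already mentioned in the text: one-sided recognizability of $\sigma$ on $X$ says precisely that $\hat\sigma\colon X^\sigma\to Y$ (with $X^\sigma$ as in \eqref{eqXsigma} and $Y$ the shift-closure of $\sigma(X)$) is injective, hence — being a continuous surjection of the compact space $X^\sigma$ onto the Hausdorff space $Y$ — a homeomorphism; identify $X^\sigma$ with a one-sided shift on $A^\sigma$ through the morphism $\alpha$ of \eqref{eqalpha}. By uniform continuity of $\hat\sigma^{-1}$ there is an $N$ such that if $p,p'\in Y$ agree on their first $N$ letters then $\alpha(\hat\sigma^{-1}(p))_0$ and $\alpha(\hat\sigma^{-1}(p'))_0$ coincide; denote by $\kappa(p)$ the second coordinate of this letter of $A^\sigma$, so that $\hat\sigma^{-1}(p)=(u,\kappa(p))$ is the unique $\sigma$-representation of $p$ with $u\in X$. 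Evaluating at $p=S^iy$: if $i\in C^+(x)$, say $i=|\sigma(x_{[0,n)})|$, then $S^iy=\sigma(S^nx)$ has the representation $(S^nx,0)$, so $\kappa(S^iy)=0$; if $i\notin C^+(x)$, then $i=|\sigma(x_{[0,m)})|+\ell$ with $1\le\ell<|\sigma(x_m)|$, so $S^iy=S^\ell\sigma(S^mx)$ and $\kappa(S^iy)=\ell\ge1$. Thus $i\in C^+(x)\iff\kappa(S^iy)=0$, and the right-hand side is determined by $y_{[i,i+N)}$; so $y_{[i,i+N)}=y_{[j,j+N)}$ forces $i\in C^+(x)\iff j\in C^+(x)$, which is one-sided recognizability in the sense of Moss\'e for $x$ with scope $N$.

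For (ii), fix a scope $N$ witnessing one-sided Moss\'e recognizability of $\sigma$ for $x$, let $z\in X$, and let $(z',k')$ be a $\sigma$-representation of $w=\sigma(z)$ with $z'\in X$; the goal is $(z',k')=(z,0)$. Inside $w$ compare the increasing sets of cutting positions of the two representations: $C^+(z)$ for $(z,0)$, and $D=\{\,|\sigma(z'_{[0,n)})|-k'\mid n\ge0,\ |\sigma(z'_{[0,n)})|\ge k'\,\}$ for $(z',k')$. If $k'=0$ and $C^+(z)=D$, then the two representations cut $w$ into the same blocks, the $n$-th block being both $\sigma(z_n)$ and $\sigma(z'_n)$, so $z=z'$ since $\sigma$ is injective on $A$, and we are done. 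Otherwise $C^+(z)\triangle D\ne\emptyset$, and — swapping the two representations if necessary, both being $\sigma$-representations of $w$ with first component in $X$ — I may take $p\in C^+(z)\setminus D$. The remaining step is to feed $p$ back into $x$ and contradict the Moss\'e property of $x$.

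Since $p\in C^+(z)$, say $p=|\sigma(z_{[0,n)})|$, for large $M$ the window $w_{[p,p+M)}$ is a prefix of $\sigma(z_{[n,n+m)})$ for suitable $m$; as $X$ is generated by $x$, the factor $z_{[n,n+m)}$ occurs in $x$, say $z_{[n,n+m)}=x_{[a,a+m)}$, giving $w_{[p,p+M)}=y_{[q,q+M)}$ with $q=|\sigma(x_{[0,a)})|\in C^+(x)$. Since $p\notin D$, the position $p+k'$ lies strictly inside a block $\sigma(z'_s)$ at offset $r$ with $1\le r<|\sigma(z'_s)|$, so for large $M$ the window $w_{[p,p+M)}$ is the factor at offset $[r,r+M)$ of $\sigma(z'_{[s,s+m')})$ for suitable $m'$; writing $z'_{[s,s+m')}=x_{[b,b+m')}$ yields $w_{[p,p+M)}=y_{[c+r,c+r+M)}$ with $c=|\sigma(x_{[0,b)})|\in C^+(x)$ and, since $1\le r<|\sigma(x_b)|$, with $c+r$ lying strictly between the consecutive cutting points $c$ and $c+|\sigma(x_b)|$ of $x$, hence $c+r\notin C^+(x)$. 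Choosing $M\ge N$ gives $y_{[q,q+N)}=y_{[c+r,c+r+N)}$ with $q\in C^+(x)$ and $c+r\notin C^+(x)$, contradicting the choice of $N$. Hence no $(z',k')\ne(z,0)$ exists and $\sigma$ is weakly one-sided recognizable on $X$. The crux — and the one place the argument could go wrong — is this transport: one has to pick $p$ where the two cutting patterns of $w$ actually diverge, and then shepherd the block-interior offset $r$ unchanged through occurrences of factors of $z$ and of $z'$ inside $x$, producing in $y=\sigma(x)$ a single window that sits once at a cutting point of $x$ and once strictly inside one of its blocks, which is exactly what Moss\'e recognizability of $x$ rules out; part (i) is comparatively routine once $\hat\sigma$ is known to be a homeomorphism.
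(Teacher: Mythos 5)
Your part (i) is essentially the paper's own argument: both pass to the tower system $(X^\sigma,T)$, use that $\hat\sigma$ is a homeomorphism to get a window size $N$ determining the second coordinate of the unique $\sigma$-representation of $S^iy$, and observe that this coordinate vanishes exactly when $i\in C^+(x)$. For part (ii), however, you take a genuinely different route. The paper does not argue directly: it imports Claims 1--4 from the proof of Theorem 2.5 in the cited work of Berth\'e, Steiner, Thuswaldner and Yassawi (in particular that $\sigma(X)$ is clopen, that $S^m(x')\in\sigma(X)$ iff $m\in C^+(x')$, and that $\sigma$ is a homeomorphism onto $\sigma(X)$) and then concludes in two lines. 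You instead give a self-contained combinatorial argument: superpose the two cutting-point patterns of $w=\sigma(z)$, note that they differ unless $(z',k')=(z,0)$ up to injectivity of $\sigma$ on $A$, pick a position $p$ where they diverge, and transport the window $w_{[p,p+M)}$ into $y=\sigma(x)$ through occurrences in $x$ of factors of $z$ and of $z'$, obtaining the same window of $y$ once at a point of $C^+(x)$ and once strictly inside a block --- contradicting the Moss\'e scope. I checked the transport step and it is sound (including the observation that $0\in C^+(z)\setminus D$ whenever $k'\neq 0$, so the "otherwise" case is exactly $C^+(z)\triangle D\neq\emptyset$; when the divergence lies in $D\setminus C^+(z)$ one runs the symmetric argument rather than literally "swapping"). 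Your approach buys self-containedness and, notably, uses only that $X$ is generated by $x$ (so every factor of every point of $X$ occurs in $x$), not minimality; the price is your explicit restriction to non-erasing $\sigma$, which is in fact the same restriction under which the paper's reference operates, so nothing is lost relative to the paper's own proof.
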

\begin{proof}
  (i) Set $y=\sigma(x)$. Since $\sigma$ is one-sided recognizable on $X$,
  its restriction to $X$ is a homeomorphism from $(X^\sigma,T)$ onto
  the closure $Y$ under the shift of $\sigma(X)$.
  Let $\hat{\sigma}$ be the morphism from $(X^\sigma,T)$ onto $Y$
  defined by $\hat{\sigma}(x,i)=S^i\sigma(x)$ where $(X_\sigma,T)$
  is defined by \eqref{eqXsigma}.
  Since $\hat{\sigma}$ is a homeomorphism, there is an integer $N$ such that for every $z,z'\in Y$ with $z=\hat{\sigma}(t,i)$ and $z'=\hat{\sigma}(t',i')$
  such that $z_{[0,N)}=z'_{[0,N)}$, we have $(t_0,i)=
      (t'_0,i')$. In particular,
      $z\in \sigma(X)$ if and only if
      $z'\in\sigma(X)$. Suppose that $i\in C^+(x)$ and $y_{[i,i+N)}=y_{[j,j+N)}$.
        Let $m\ge 0$ be such that $\sigma(x_{[0,m)})=y_{[0,j-k)}$
        with $0\le k<|\sigma(x_m)|$.
        Since $S^i(y)$ is in $\sigma(X)$, we have also $S^j(y)\in \sigma(X)$
        and thus $S^j(y)$ has a $\sigma$-representation $(x',0)$ with $x'\in X$.
        Since $\sigma$ is one-sided recognizable on $X$,
        $S^j(y)$ cannot have a $\sigma$-representation $(S^m(x),k)$
        with  $k>0$. This implies that $k=0$ and thus $j$ is in $C^+(x)$.

        (ii) We follow the same steps as in~\cite{BertheSteinerThuswaldnerYassawi2019}. The proof for the first ones (1 to 4)
        is the same and we don't reproduce it.

        Claim 1. For every $z\in B^\N$, one has $z\in\sigma(X)$
        if and only if for every sequence $m_i$ such that $S^{m_i}(y)$
        converges to $z$, one has $m_i\in C^+(x)$ for all large enough $i$.

        Claim 2. The set $\sigma(X)$ is clopen.

        Claim 3. For every $x'\in X$ and $m\ge 0$, one has $S^m(x')\in\sigma(X)$
        if and only if $m\in C^+(x')$.

        Claim 4. $\sigma$ is a homeomorphism from $X$ onto $\sigma(X)$.

        Assume now that $y'=\sigma(x')$ with $x'\in X$ and that $y'=S^k\sigma(x'')$ with $x''\in X$ and $0\le k<|\sigma(x''_0)|$. By Claim 3,
        we have $k\in C^+(x'')$, which forces $k=0$. Finally, by Claim 4,
        we obtain $x'=x''$.
\end{proof}
Observe that  Assertion (ii) is weaker than the corresponding assertion
in Proposition~\ref{propositionTwoDefsRec} since the conclusion
is not that $\sigma$ is one-sided recognizable on $X$. This
is actually not true, as shown in the following example.
\begin{example}\label{exampleContrEx}
  Let $\sigma\colon a\mapsto ba,b\mapsto aa$
  (note that $X(\sigma)$ is the period-doubling shift of Example
  \ref{examplePeriodDoubling}). Since $\sigma$ is
  primitive and aperiodic, it is recognizable on $X(\sigma)$.
  Thus, by Proposition~\ref{propositionTwoDefsRec}, it is recognizable in the sense
  of Moss\'e for every $x\in X$. Since $\sigma(A)$ is a suffix code,
  this implies, by Proposition~\ref{proposition12Mosse},
  that $\sigma$ is one-sided recognizable in the
  sense of Moss\'e for every $x\in X$. This implies
  in turn by Proposition~\ref{propositionWeakRec} that $\sigma$ is weakly
  one-sided recognizable on $X(\sigma)^+$.
  
  It is however not
  one-sided recognizable on $X(\sigma)^+$ because the sequence
  $y=a\sigma(a)\sigma^2(a)\cdots$ is such that $y=a\sigma(y)$
  and consequently has the two $\sigma$-representations
  $(ay,1)$ and $(by,1)$.
  \end{example}

There are primitive morphisms which are not one-sided
recognizable on $X^+$ (see Examples \ref{exampleContrEx} and~\ref{exampleAntiFibo}).
The following result characterizes morphisms which
are not one-sided recognizable. It is closely
related with the main result of \cite{AkiyamaTanYuasa2017} (see the comment
after the proof).
\begin{theorem}\label{theoremCharactOneSided}
  Let $\sigma\colon A^*\to B^*$ be a morphism and let
  $X$ be a shift space on $A$  with at most a finite number
  of periodic points and such that $\sigma$ is recognizable
  on $X$ at aperiodic points. Let $Y$ be the closure of
  $\sigma(X)$ under the shift and let $y\in Y^+$
  be an aperiodic point.
  
  The morphism  $\sigma$ is not one-sided
  recognizable on $X^+$ at  $y$ if and only if
  there are words $u,u'\in A^+$ and $v\in B^+$, and
  a one-sided sequence  $x\in X^+$ such that (see Figure~\ref{figureNotOneSided})
  \begin{enumerate}
    \item[\rm(i)] $ux,u'x\in X^+$,
    \item[\rm(ii)] $y=v\sigma(x)$ with $v$ a suffix of $\sigma(u)$
      and $\sigma(u')$,
      \item[\rm(iii)] the last letters of $u,u'$ are distinct.
      \end{enumerate}
  In particular, $y$ is a shift of a left-special sequence
  in $Y^+$.
\end{theorem}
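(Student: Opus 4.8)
The plan is to prove the two implications of the equivalence separately and to read off the ``in particular'' clause along the way, in every case reducing the one-sided picture to the two-sided one so that the standing hypothesis (that $\sigma$ is recognizable on $X$ at aperiodic points) applies. The basic device will be this: given a one-sided $\sigma$-representation $(z,k)$ of $y$ with $z\in X^+$, I choose $W\in X$ with $W^+=z$ and set $\bar y=S^k\sigma(W)$; then $\bar y\in Y$, $\bar y^+=y$, the pair $(W,k)$ is a two-sided $\sigma$-representation of $\bar y$, and $\bar y$ is aperiodic because $y$ is (the positive part of a purely periodic bi-infinite sequence is purely periodic). I will use here that $\sigma(W)$ is two-sided infinite; this is automatic for non-erasing $\sigma$ and holds for substitution shifts by \cite[Lemma~3.13]{BealPerrinRestivo2022}, and in the erasing case one first passes to a shift of $W$ for which it holds.

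For \emph{necessity}, suppose $\sigma$ is not one-sided recognizable on $X^+$ at $y$ and fix two distinct one-sided $\sigma$-representations $(z,k)\ne(z',k')$ of $y$ with $z,z'\in X^+$. Lifting both as above yields $\bar y=S^k\sigma(W)$ and $\bar y'=S^{k'}\sigma(W')$ with two-sided $\sigma$-representations $(W,k)$ and $(W',k')$. If $\bar y=\bar y'$, then recognizability of $\sigma$ on $X$ at the aperiodic point $\bar y$ forces $(W,k)=(W',k')$, hence $(z,k)=(z',k')$, a contradiction; so $\bar y\ne\bar y'$. Since $\bar y_i=\bar y'_i$ for all $i\ge0$, there is a largest index $j\le-1$ with $\bar y_j\ne\bar y'_j$; putting $w=\bar y_{[j+1,\infty)}$, the sequences $\bar y_jw=(S^j\bar y)^+$ and $\bar y'_jw=(S^j\bar y')^+$ both belong to $Y^+$ and $\bar y_j\ne\bar y'_j$, so $w$ is left-special in $Y^+$ and $y=\bar y^+=S^{-j-1}(w)$ is a shift of it. This already proves the last sentence of the theorem. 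To extract $u,u',v,x$, I will take a cutting point $q\le j$ of $\bar y$ relative to $W$ (cutting points tend to $-\infty$), so that $\bar y_{[q,\infty)}=\sigma(\xi)$ for a tail $\xi\in X^+$ of $W$; reducing the offset $-q\ge1$ to a legal one gives $y=v\sigma(x)$ with $x\in X^+$ and $v$ a nonempty suffix of $\sigma(u)$ for a letter $u$ with $ux\in X^+$ (if the reduced offset is $0$ one enlarges $u$ by one more letter of $\xi$, which is non-erasable there since $y$ is infinite). Doing the same with $\bar y'$, $W'$ and a cutting point $q'\le j$ produces $u',v',x'$. The remaining work is to push $q,q'$ far enough left that the two block decompositions already coincide on the common right part $w$; then the decompositions of $y$ obtained from $\bar y$ and from $\bar y'$ share the same tail beyond the mismatch at index $j$, which forces $v=v'$ and $x=x'$, while the disagreement $\bar y_j\ne\bar y'_j$ occurring inside the blocks $\sigma(u)$ and $\sigma(u')$ forces the last letters of $u$ and $u'$ to differ; that is (iii).

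For \emph{sufficiency}, given $u,u',v,x$ with (i)--(iii), write $\sigma(u)=s_1v$ and $\sigma(u')=s_2v$, so that $\sigma(ux)=s_1y$ and $\sigma(u'x)=s_2y$. Since $0\le|s_1|<|\sigma(u)|$, I write $|s_1|=|\sigma(u_{[0,p)})|+c$ with $0\le c<|\sigma(u_p)|$ and $p<|u|$, and similarly $|s_2|=|\sigma(u'_{[0,p')})|+c'$; then $\bigl(u_{[p,|u|)}x,\,c\bigr)$ and $\bigl(u'_{[p',|u'|)}x,\,c'\bigr)$ are two $\sigma$-representations of $y$ with sequences in $X^+$. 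If they coincided, then $u_{[p,|u|)}x=u'_{[p',|u'|)}x$, so one of $u_{[p,|u|)}$, $u'_{[p',|u'|)}$ is a prefix of the other; if they have the same length this contradicts (iii), and otherwise $x$ is seen to be purely periodic. Hence when $x$ is aperiodic the two representations are distinct and $\sigma$ fails one-sided recognizability at $y$, as wanted; when $x$ is purely periodic I will argue separately, using that $X$ has only finitely many periodic points and that $y$ itself is aperiodic to replace $(u,u')$ by a pair giving genuinely distinct representations.

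The main obstacle I anticipate is the alignment step in the necessity proof: turning the two independent desubstitutions into a \emph{single} triple $(u\text{ or }u',\,v,\,x)$ with distinct last letters requires careful control of the cutting points of $\bar y$ and $\bar y'$ to the left of the mismatch at index $j$, and the same bookkeeping is what makes the purely periodic sub-cases (in both directions) and the erasing case delicate.
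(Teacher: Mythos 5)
There is a genuine gap in the necessity direction, located exactly where you yourself flag ``the main obstacle''. After producing the two lifts $\bar y\ne\bar y'$ with $\bar y^+=\bar y'^+=y$, you take the last \emph{negative} coordinate $j$ at which the letter sequences $\bar y$ and $\bar y'$ differ. That correctly yields the final ``in particular'' clause, but it is of no use for constructing $u,u',v,x$: what the theorem really asserts is that the two \emph{decompositions} of the one-sided sequence $y$ (the cutting points of $(z,k)$ and of $(z',k')$ together with the preimage letters) coincide from some nonnegative position onward, the common tail being $\sigma(x)$ and the finitely many positions of disagreement being absorbed into $u,u'$; condition (iii) comes from taking the \emph{last} disagreement of the decompositions, not of the image letters, which agree everywhere on $[0,\infty)$. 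Your proposed fix --- choosing cutting points $q,q'\le j$ ``far enough left so that the two block decompositions already coincide on the common right part'' --- cannot work: the cutting points of each representation on $[0,\infty)$ are completely determined by $(W,k)$ and $(W',k')$ and do not change as you move $q,q'$ leftward, and nothing in your argument prevents the two decompositions from disagreeing at arbitrarily large positive coordinates. The paper closes precisely this hole by using the hypotheses in a way your plan never does: since $X$ has only finitely many periodic points, the orbits of $t,t'$ lie in a clopen shift-invariant set $U$ on which $\hat{\sigma}$ is a homeomorphism, so $\hat{\sigma}^{-1}$ is a sliding block code with some window size $N$; applied at coordinates $i\ge N$, where both lifts carry the same window $y_{[i-N,i+N]}$, it forces the two decompositions to agree, so the disagreement region is finite and contained in $[0,N)$. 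You invoke recognizability only to conclude $\bar y\ne\bar y'$, which is a much weaker use and leaves the central step unproved.

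The sufficiency direction follows the paper's construction and is essentially sound; both you and the paper leave a small degenerate case (the two constructed representations could coincide as pairs only when $x$ is periodic and the offsets agree, which would make $y$ eventually periodic), and you are at least candid that you would ``argue separately'' there. But overall the proposal is a programme whose decisive step --- deducing the eventual coincidence of the two desubstitutions from recognizability via the uniform window $N$ --- is announced rather than carried out, so it does not yet constitute a proof.
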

\begin{proof}
  \begin{figure}[hbt]
    \centering
    \tikzset{node/.style={circle,draw,minimum size=0.1cm,inner sep=0pt}}
    \tikzset{title/.style={minimum size=0.1cm,inner sep=0pt}}
    \begin{tikzpicture}
      \node[node](u)at(-1.5,.5){};
      \node[node](w)at(-1.5,-.5){};
      \node[node](y)at(0,0){};
      \node[node](z)at(1,0){};
      \node[title]at(5,0){$y$};

      \draw[above](y)edge node{$v$}(z);
      \draw[bend left,above](u)edge node{$u$}(z);
      \draw[above](u)edge node{}(y);
      \draw[bend right,below](w)edge node{$u'$}(z);
      \draw[below](w)edge node{}(y);
      \draw[bend left,above](z)edge node{}(2,.7);
      \draw[above](2,.7)edge node{$x$}(4,.7);
      \draw[above](z)edge node{$\sigma(x)$}(4,0);
    \end{tikzpicture}
    \caption{The morphism $\sigma$ is not one-sided recognizable at $y$.}
    \label{figureNotOneSided}
    \end{figure}
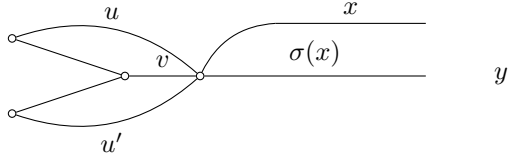
  Let $y\in B^\N$ be an aperiodic point
  with two distinct $\sigma$-representations
$(z,k)$ and $(z',k')$ with $z,z'\in X^+$. 
Note that, since $y$ is aperiodic,
  we cannot have $z=z'$. 

  Let $t,t'\in X$ be such
  that $t^+=z$ and $t'^+=z'$. Since $X$ contains a finite number
  of periodic points, their complement is a shift-invariant
  open set. Thus there is a clopen set $U\subset X$
  containing the orbits of $t,t'$ such that $\sigma$ is recognizable on $U$.
  Let $V$ be the closure of $\sigma(U)$ under the shift. Then $\hat{\sigma}$
  is a homeomorphism from $U^\sigma=\{(u,k)\mid u\in U, 0\le k<|\sigma(u_0)|\}$
  onto $V$. Let $N$ be the window size of the block map defining
  the restriction of $\hat{\sigma}$ to $U^\sigma$.

   Since $\sigma(t)_{k+i}=\sigma(t')_{k'+i}=y_i$
  for every $i\ge 0$, 
  we have $t_{k+j}=t'_{k'+j}$ for all $j\ge N$. Since $z\ne z'$,
  there is an index $n$ with $0\le n<N$ such that $z_{k+n}\ne z'_{k'+n}$.
  We choose $n$ maximal.
  Set
  \begin{eqnarray*}
    x&=&S^{k+n}z=S^{k'+n}z',\quad u=z_{[0,k+n)},\quad u'=z'_{[0,k'+n)},\\
        v&=&S^k\sigma(u)=S^{k'}\sigma(u').
    \end{eqnarray*}
  It is then easy to verify that conditions (i), (ii) and (iii)
  are satisfied. Moreover, since $t,t'$ are right asympotic,
  $y$ is a shift of a left-special sequence in $Y^+$.

  Conversely, set $\sigma(u)=pv$ and $\sigma(u')=p'v$. We may assume
  that $p,p'$ are proper prefixes of the image by $\sigma$
  of the first letters of $u$ and $u'$ respectively
  (otherwise we can shorten $u$ or $u'$ by one letter).
  Let $k=|\sigma(p)|$ and $k'=|\sigma(p')|$. Set $z=ux$ and $z'=u'x$.
  Then $z,z'\in X^+$, $0\le k<|\sigma(z_0)|$ and $0\le k'<|\sigma(z'_0)$.
  Then
  \begin{displaymath}
    y=S^k\sigma(z)=S^{k'}\sigma(z')
  \end{displaymath}
  and thus $(z,k)$ and $(z',k')$ are two distinct $\sigma$-representations of $y$.
\end{proof}

Note first that, by Theorem~\ref{theoremRecognizabilityAperiodic},
the hypotheses of Theorem~\ref{theoremCharactOneSided}
are satisfied when $\sigma\colon A^*\to A^*$
is an endomorphism and $X=X(\sigma)$.

Note also the connection with the main result of \cite{AkiyamaTanYuasa2017}.
By \cite[Theorem 1.1]{AkiyamaTanYuasa2017}, a primitive morphism
$\sigma\colon A^*\to A^*$ with a fixed point $x$
is not one-sided recognizable in the sense of Moss\'e if
and only if for every $N\ge 0$ there are $i,j\ge 0$ such
that $\sigma(x_{[i+1,i+N]})=\sigma(x_{[j+1,j+N]})$
with $\sigma(x_i)$ a proper suffix of $\sigma(x_j)$.

Note that the number of shifts
of a left-special sequence at which a morphism  fails to
be  one-sided recognizable can be arbitrary large
(see Example~\ref{exampleAntiFibo}).

We say that a morphism $\sigma\colon A^*\to B^*$ is
\emph{almost one-sided recognizable}
on a one-sided shift $X$ if $\sigma$ is  one-sided recognizable on $X$ except
at a finite number of points of  $B^\N$.

In the case of an endomorphism, we have the following more precise
statement.
\begin{theorem}\label{theoremAlmostRec}
  Let $\sigma\colon A^*\to A^*$ be a  morphism such that
  $X(\sigma)$ is minimal aperiodic.
  Then $\sigma$ is almost one-sided recognizable on $X(\sigma)^+$.
\end{theorem}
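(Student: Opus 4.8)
The plan is to deduce the theorem from Theorem~\ref{theoremCharactOneSided} together with the finiteness results already available for minimal aperiodic substitution shifts. Since $X(\sigma)$ is minimal aperiodic, it contains no periodic points at all, so the hypotheses of Theorem~\ref{theoremCharactOneSided} are satisfied with $X=X(\sigma)$: indeed $\sigma$ is recognizable on $X(\sigma)$ by Moss\'e's Theorem (Theorem~\ref{theoremMosse}), hence in particular at aperiodic points. Write $Y$ for the closure of $\sigma(X(\sigma))$ under the shift; by \cite[Lemma 3.13]{BealPerrinRestivo2022} one has $\sigma(x)\in X(\sigma)$ for every $x\in X(\sigma)$, so in fact $Y\subseteq X(\sigma)$, and in any case $Y$ is a minimal aperiodic shift of at most linear complexity (Propositions~\ref{propositionMinimalLR} and~\ref{propositionLRisLinear}). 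By Theorem~\ref{theoremCharactOneSided}, every point $y\in Y^+$ at which $\sigma$ fails to be one-sided recognizable on $X(\sigma)^+$ is a shift of a left-special sequence in $Y^+$. So it suffices to show that the set of such points is finite.

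The key step is a finiteness count. First, the number of left-special sequences in $Y^+$ is finite: since $Y$ has at most linear complexity, $s_n(Y)$ is bounded by Proposition~\ref{propositionCassaigne}, hence by \eqref{eqsn} the number of left-special words of each length is bounded, and therefore the number of left-special right-infinite sequences in $Y^+$ is finite. Now fix one such left-special sequence $w\in Y^+$; I must bound the number of $k\ge 0$ for which $S^k(w)$ is a point of non-recognizability. If $S^k(w)$ is such a point, then by Theorem~\ref{theoremCharactOneSided} there are words $u,u'\in A^+$ with distinct last letters and $x\in X(\sigma)^+$ with $ux,u'x\in X(\sigma)^+$ and $S^k(w)=v\sigma(x)$ for a common suffix $v$ of $\sigma(u),\sigma(u')$; in particular $S^k(w)$ has (at least) two left extensions in $Y^+$ coming from $S^{k-1}\sigma(ux)$ and $S^{k-1}\sigma(u'x)$ having different letters in position $-1$, so $S^k(w)$ is itself left-special. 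Thus each point of non-recognizability on the orbit of $w$ is a left-special sequence in $Y^+$ that lies in the forward orbit of $w$; since there are only finitely many left-special sequences in $Y^+$ in total, and since $w$ being aperiodic means distinct shifts $S^k(w)$ are distinct sequences, the orbit of $w$ can meet this finite set only finitely often. Combining, the total number of points of $Y^+$ at which $\sigma$ is not one-sided recognizable is at most (number of left-special sequences in $Y^+$) times (maximal number of left-special sequences on a single orbit), which is finite. Finally, points of $B^\N$ outside $Y^+$ cannot have a $\sigma$-representation with $x\in X(\sigma)^+$ at all (such $y$ would lie in $\sigma(X(\sigma))\subseteq Y^+$ after the appropriate shift, and $Y^+$ is shift-invariant), so $\sigma$ is vacuously one-sided recognizable there. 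Hence $\sigma$ is almost one-sided recognizable on $X(\sigma)^+$.

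The main obstacle is making precise the bound "number of non-recognizability points on the orbit of a fixed left-special sequence $w$ is finite." The clean way is the observation just sketched: every such point is itself left-special in $Y^+$ (it has two incompatible left extensions inside $Y$), the set $LS$ of left-special sequences of $Y^+$ is finite, and an aperiodic orbit meets any finite shift-invariant-free set only finitely often — indeed if $S^{k_1}(w),\dots,S^{k_m}(w)\in LS$ with $k_1<\cdots<k_m$ and $m>|LS|$, two of them coincide, forcing $w$ periodic, a contradiction. One should double-check that the "in particular" clause of Theorem~\ref{theoremCharactOneSided} is being used correctly: it gives that each bad $y$ is a shift $S^k$ of some left-special $w\in Y^+$, so summing over the finitely many $w$ and bounding the $k$'s as above yields the global finiteness. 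An alternative, slightly heavier route would go through Proposition~\ref{pro:eqomegaC} and Proposition~\ref{propositionNbAsymptotic} to bound directly the total number of pairs $(w,k)$, but the left-special counting argument above is the most economical.
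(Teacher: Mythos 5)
Your reduction to ``every point of non-recognizability is a shift of a left-special sequence of $Y^+$'' is fine, but the key finiteness step fails: it is not true that every such point is itself left-special. In the notation of Theorem~\ref{theoremCharactOneSided}, write $\sigma(u)=pv$ and $\sigma(u')=p'v$; the two left extensions of $y=v\sigma(x)$ inside $Y$ that you exhibit are by the last letters of $p$ and $p'$, not of $u$ and $u'$, and condition (iii) only guarantees that $u$ and $u'$ end with distinct letters --- the words $p$ and $p'$ may perfectly well end with the same letter (or be empty), in which case $y$ need not be left-special. Example~\ref{exampleAntiFibo} makes this concrete: for $\tilde{\sigma}^n$ on the Fibonacci shift, the points of non-recognizability are the $F_{n+1}$ points $S^i\tilde{\sigma}^n(bt)$, $0\le i<F_{n+1}$, while the Fibonacci shift has exactly one left-special right-infinite sequence; so for $n$ large, almost all of these bad points are not left-special, and your bound ``(number of left-special sequences) times (number of left-special points per orbit)'' collapses. (A minor additional point: you invoke Moss\'e's Theorem, which requires primitivity, whereas the hypothesis here is only that $X(\sigma)$ is minimal aperiodic; the correct tool is Theorem~\ref{theoremRecognizabilityAperiodic}.)

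What is missing is a \emph{uniform} bound on the shift: one must show that every bad point has the form $S^i(t)$ with $t$ left-special and $i\le N$ for a single constant $N$, after which finiteness of the set of left-special one-sided sequences (the remark following Proposition~\ref{propositionLRisLinear}) concludes. This is exactly what the paper's proof supplies: two-sided recognizability of $\sigma$ on $X(\sigma)$ makes $x\mapsto\sigma(x)$ a homeomorphism onto its image, so there is an $N$ such that $\sigma(z)_{[-N,N]}=\sigma(z')_{[-N,N]}$ forces $z_0=z'_0$. Given two distinct representations $(x,k)$ and $(x',k')$ of $y$, one extends $x,x'$ to two-sided points $z,z'$, observes that $S^k\sigma(z)$ and $S^{k'}\sigma(z')$ agree on all nonnegative coordinates and hence must disagree at some coordinate $-i$ with $1 \le i\le N$ (otherwise $x=x'$, contradicting aperiodicity), and the minimal such $i$ produces a left-special $t$ with $y=S^i(t)$ and $i\le N$. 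Your argument as written produces no bound on $i$, so the proof is incomplete; the orbit-counting fix you propose does not repair it, since it rests on the false left-specialness claim.
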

\begin{proof}
  Since $\sigma$ is minimal and aperiodic, it is recognizable
  on $X(\sigma)$ by Theorem~\ref{theoremRecognizabilityAperiodic}.
  Then the map $x\in X(\sigma)\mapsto \sigma(x)$
  is injective and thus the restriction of $\sigma$ to
  $X(\sigma)$ is a homeomorphism from $X(\sigma)$ onto
  $\sigma(X(\sigma))$. This implies that there is an integer $N\ge 1$
  such that for every $x,x'\in X(\sigma)$,
  if $\sigma(x)_{[-N,N]}=\sigma(x')_{[-N,N]}$ then $x_0=x'_0$.

  We will show that if $y\in X(\sigma)^+$ has several $\sigma$-representations,
  then $y=S^i(t)$ with $t\in X(\sigma)^+$ left-special and $i\le N$.
  By the remark following Proposition~\ref{propositionLRisLinear}, this implies our conclusion.

  Let $y\in X(\sigma)^+$ have two distinct $\sigma$-representations
  $(x,k)$ and $(x',k')$. Note that, since $X(\sigma)$ is aperiodic,
  we cannot have $x=x'$. Then $y=S^k(\sigma(x))=S^{k'}(\sigma(x'))$
  with $x,x'\in X(\sigma)^+$. Let $z,z'\in X(\sigma)$ be such
  that $z^+=x$ and $z'^+=x'$.

  If $S^k\sigma(z)_{[-N,N]}=S^{k'}\sigma(z')_{[-N,N]}$,
  then $S^k\sigma(z)_{[-N+i,N+i]}=S^{k'}\sigma(z')_{[-N+i,N+i]}$ for all $i\ge 0$
  (because $S^k(\sigma(z))_j=S^{k'}(\sigma(z'))_j=y_j$ for all $j\ge 0$)
  and thus $x=x'$, a contradiction. This implies that
  for some $i\le N$, we have $S^k\sigma(z)_{-i}\ne S^{k'}\sigma(z')_{-i}$.
  We choose $i$ minimal. 
  Then  $t=S^{k-i}\sigma(z)^+=S^{k'-i}\sigma(z')^+$ is left special
  and such that $S^i(t)=y$.
  
\end{proof}

\begin{example}\label{exampleAntiFibo}
  Let $\sigma\colon a\mapsto ab,b\mapsto a$ be the Fibonacci morphism.
  The morphism $\tilde{\sigma}\colon a\mapsto ba,b\mapsto a$ is not one-sided
  recognizable. Since $a\tilde{\sigma}(x)=\sigma(x)a$
  for every $x\in A^*$, the shift $X(\tilde{\sigma})$ is equal to the Fibonacci shift.
  Since $X(\sigma)$ is a Sturmian shift, there is a unique
  left-special sequence, which is $t=\sigma^\omega(a)$.
  Since $\sigma(t)=t$, we have also
  $a\tilde{\sigma}(t)=t$.

  Accordingly, the morphism
  $\tilde{\sigma}$ is not one-sided recognizable at $t$, since
  \begin{displaymath}
    t=\tilde{\sigma}(bt)=S\tilde{\sigma}(at).
  \end{displaymath}
  The first equality comes from
  \begin{displaymath}
    \tilde{\sigma}(bt)=a\tilde{\sigma}(t)=t,
  \end{displaymath}
  and the second one from
  \begin{displaymath}
\tilde{\sigma}(at)=ba\tilde{\sigma}(t)=bt.
  \end{displaymath}
  Consider now the morphism $\tilde{\sigma}^n$. We have
  \begin{eqnarray*}
    \tilde{\sigma}^n(at)=&\tilde{\sigma}^{n-1}(b)&\tilde{\sigma}^{n-1}(a)\tilde{\sigma}^n(t)=\tilde{\sigma}^{n-1}(b)\tilde{\sigma}^{n-1}(t)\\
    \tilde{\sigma}^n(bt)=&       &\tilde{\sigma}^{n-1}(a)\tilde{\sigma}^n(t).
  \end{eqnarray*}
  Set $F_n=|\tilde{\sigma}^{n-1}(b)|$.
  Then, for $0\le i< F_{n+1}$, the sequence $S^i\tilde{\sigma}^n(bt)$
  has the two $\tilde{\sigma}^n$-representations
  \begin{displaymath}
    (\tilde{\sigma}^n(bt),i)\mbox{ and } (\tilde{\sigma}^{n}(at),F_n+i).
    \end{displaymath}
  This shows that the number of shifts of a left-special
  sequence at which a morphism 
  fails to be one-sided recognizable can be arbitrary large.
\end{example}

The following example (due to Fabien Durand \cite{Durand2021})
shows that, for a non minimal morphism,
the set of left-special right-infinite sequences may be infinite
and that Theorem~\ref{theoremAlmostRec} may be false.

  \begin{figure}[hbt]
    \centering
    \tikzset{node/.style={circle,draw,minimum size=0.1cm,inner sep=0pt}}
    \begin{tikzpicture}
      \node[node](0)at(0,0){};
      \node[node](a)at(1,1){};\node[node](b)at(1,0){};\node[node](c)at(1,-1){};
      \node[node](ab)at(2,2){};\node[node](ac)at(2,1){};
      \node[node](ba)at(2,0){};
      \node[node](ca)at(2,-1){};\node[node](cc)at(2,-2){};
      \node[node](aba)at(3,3){};
      \node[node](aca)at(3,2){};\node[node](acc)at(3,1){};
      \node[node](bab)at(3,0.5){};\node[node](bac)at(3,-0.5){};
      \node[node](cab)at(3,-1){};
      \node[node](cca)at(3,-2){};\node[node](ccc)at(3,-3){};
      \draw[above](0)edge node{$a$}(a);
      \draw[above](a)edge node{$b$}(ab);
      \draw[above](0)edge node{$b$}(b);
      \draw[above](0)edge node{$c$}(c);
      \draw[above](a)edge node{$c$}(ac);
      \draw[above](b)edge node{$a$}(ba);
      \draw[above](c)edge node{$a$}(ca);
      \draw[above](c)edge node{$c$}(cc);
      \draw[above](ab)edge node{$a$}(aba);
      \draw[above](ac)edge node{$a$}(aca);
      \draw[above](ac)edge node{$c$}(acc);
      \draw[above](ba)edge node{$b$}(bab);\draw[above](ba)edge node{$c$}(bac);
      \draw[above](ca)edge node{$b$}(cab);
      \draw[above](cc)edge node{$a$}(cca);\draw[above](cc)edge node{$c$}(ccc);

      \node[node](0)at(6,1){};
      \node[node](a)at(7,2){};\node[node](c)at(7,-1){};
      \node[node](ab)at(8,2){};
      \node[node](ca)at(8,0){};\node[node](cc)at(8,-2){};
      \node[node](aba)at(9,2){};
      \node[node](cab)at(9,0){};
      \node[node](cca)at(9,-1){};\node[node](ccc)at(9,-3){};
      \node[node](abac)at(10,2){};
      \node[node,fill=red](caba)at(10,0){};
      \node[node](ccab)at(10,-1){};
      \node[node](ccca)at(10,-2){};
      \node[node](cccc)at(10,-3){};
      \node[node](abaca)at(11,2){};
      \node[node](ccaba)at(11,-1){};

      \draw[above](0)edge node{$a$}(a);\draw[above](0)edge node{$c$}(c);
      \draw[above](a)edge node{$b$}(ab);
      \draw[above](c)edge node{$a$}(ca);\draw[above](c)edge node{$c$}(cc);
      \draw[above](ab)edge node{$a$}(aba);
      \draw[above](ca)edge node{$b$}(cab);
      \draw[above](cc)edge node{$a$}(cca);\draw[above](cc)edge node{$c$}(ccc);
      \draw[above](aba)edge node{$c$}(abac);
      \draw[above](cab)edge node{$a$}(caba);
      \draw[above](cca)edge node{$b$}(ccab);
      \draw[above](ccc)edge node{$a$}(ccca);
      \draw[above](ccc)edge node{$c$}(cccc);
      \draw[above](abac)edge node{$a$}(abaca);
      \draw[above](ccab)edge node{$a$}(ccaba);
      
    \end{tikzpicture}
    \caption{The set $\cL(X(\sigma))$ and the tree of left-special words.}
    \label{figureLeftSpec}
  \end{figure}
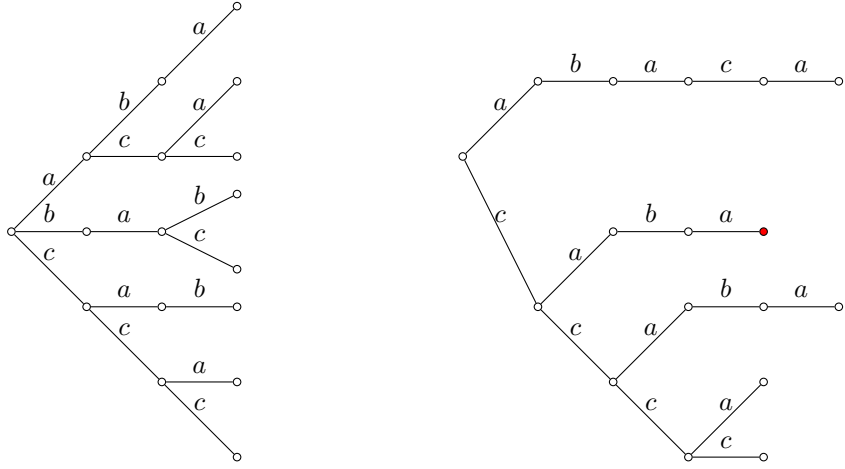
  \begin{example}
  Let $\sigma\colon a\mapsto abac,b\mapsto ab,c\mapsto c$.
  The left-special right-infinite sequences are the
  $\sigma^\omega(c^na)$ for $n\ne 1$ (see Figure~\ref{figureLeftSpec}).

  Indeed, $a$ is
  left-special since $ba,ca\in\cL(X(\sigma))$ and thus
  $\sigma^\omega(a)$ is left-special. Next $c^na$ is left-special
  since $c^{n+1}a,ac^na\in\cL(X(\sigma))$ and thus $\sigma^\omega(c^{n+1}a)$
  is left-special.

  The morphism $\sigma$ is not one-sided recognizable on $X(\sigma)^+$
  at each point $c^n\sigma^\omega(a)$ for $n\ge 1$ since.
  \begin{displaymath}
c^n\sigma^\omega(a)=\sigma(c^n\sigma^\omega(a))=S^3(\sigma(ac^{n-1}\sigma^\omega(a))).
    \end{displaymath}
\end{example}
The following picture summarizes the relations between the various notions
of recognizability.

  \begin{figure}[hbt]
    \centering
    \tikzset{node/.style={draw,minimum size=0.1cm,inner sep=8pt}}
    \tikzset{title/.style={minimum size=0.1cm,inner sep=0pt}}
    \begin{tikzpicture}
      \node[node](2rec)at(0,2.3){recognizable};
      \node[node](2recM)at(6,2.3){ Moss\'e recognizable};

      \draw[->,above,double](2rec.south east)-- node{Prop. \ref{propositionTwoDefsRec}}(2recM.south west);
      \draw[->,above,double](2recM.north west)--node{$X$ minimal, $\sigma$ injective on $A$}(2rec.north east);

      \node[node](1rec)at(0,0){one-sided recognizable};
      \node[node](w1rec)at(0,-1.5){weak one-sided recognizable};
      \node[node](1recM)at(6.5,-.5){one-sided Moss\'e recognizable};
      \draw[->,left,double](2rec.south west)-- node{$\sigma$ right-marked}(1rec.north west);
      \draw[->,left,double](1rec.north east)--node{Prop. \ref{propositionRightMarked}}(2rec.south east);
      \draw[->,below,double](1rec)-- node{Prop. \ref{propositionWeakRec}}(1recM.north west);
      \draw[->,below,double](1recM.south west)-- node{$\sigma$ injective on $A$}(w1rec);
      \draw[->,left,double](2recM.south west)--node{$\sigma(A)$ suffix}(1recM.north west);
      \draw[->,left,double](1recM.north east)--node{Prop. \ref{proposition12Mosse}}(2recM.south east);
      \draw[->,double](1rec)--node{}(w1rec);
      
    \end{tikzpicture}
    \caption{The various notions of recognizability.}\label{figureVarious}
  \end{figure}
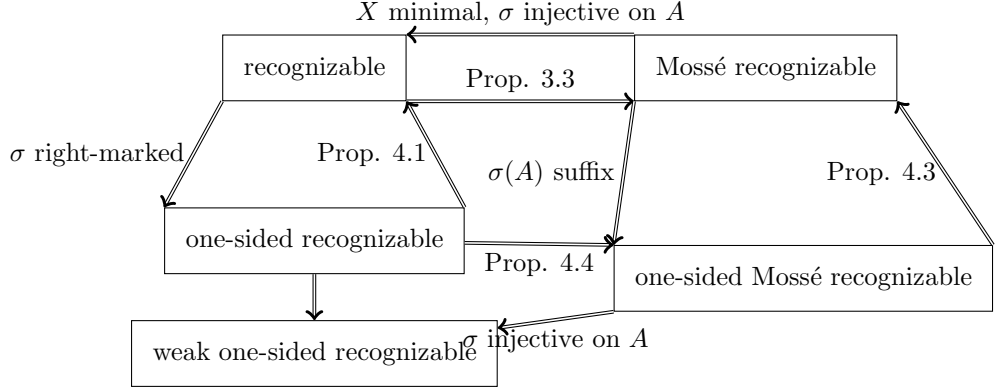
  \section{Continuous eigenvalues}
  Let $(X,T)$ be a topological dynamical system. A complex number $\lambda$
  is a \emph{continuous eigenvalue} of $(X,T)$ if there exists a continuous
  function $f\colon X\to{\mathbb C}$ with $f\ne 0$ such that $f\circ T=\lambda f$.
  Such a function is called a \emph{continuous eigenfunction}.
  The \emph{continuous spectrum} of $(X,T)$ is the set
  of continuous eignevalues of $(X,T)$. It is invariant
  under conjugacy and contains always the value $\lambda=1$
  since a constant function is continuous.
\begin{proposition}
  If $(X,T)$ is recurrent, every continuous eigenvalue is of modulus $1$
  and every continuous eigenfunction has constant modulus.
\end{proposition}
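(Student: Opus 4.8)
The plan is to study the modulus $|f|$ of a continuous eigenfunction. If $f$ is a continuous eigenfunction for the eigenvalue $\lambda$, so that $f\circ T=\lambda f$ and $f\not\equiv 0$, then taking moduli gives the identity $|f|\circ T=|\lambda|\,|f|$ on $X$, and by iteration $|f|\circ T^{n}=|\lambda|^{n}\,|f|$ for every $n\ge 0$. Since $X$ is compact and $|f|$ is continuous, $M:=\max_{x\in X}|f(x)|$ is attained, and $M>0$ because $f$ is not identically zero.

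The first step is to prove $|\lambda|\le 1$. Choosing $x\in X$ with $|f(x)|=M$ and using $|\lambda|M=|f(Tx)|\le M$ gives this at once; it uses only compactness. The second step is the reverse inequality. Since $T(X)$ is dense in $X$ (for shift spaces, and more generally for invertible systems, $T$ is in fact onto), the compact set $K:=\{\,|f(x)|\mid x\in X\,\}\subseteq[0,\infty)$ satisfies that $\{\,|f(Tx)|\mid x\in X\,\}=|\lambda|\,K$ is dense in $K$; being compact it is closed, so $|\lambda|\,K=K$, and comparing the largest elements of both sides gives $M=|\lambda|M$, hence $|\lambda|=1$.

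The third step gives the constant modulus. Once $|\lambda|=1$ the identity reads $|f|\circ T=|f|$. Let $x_{0}$ be a point of $X$ with dense forward orbit, which exists by recurrence. Then $f(x_{0})\ne 0$: otherwise $f$ would vanish on the dense set $\{T^{n}x_{0}\mid n\ge 0\}$ and hence, by continuity, on all of $X$. Since $|f(T^{n}x_{0})|=|f(x_{0})|$ for every $n\ge 0$, the continuous function $|f|$ equals the constant $|f(x_{0})|$ on a dense subset of $X$, and therefore $|f|\equiv|f(x_{0})|$ on $X$. This yields both assertions of the statement simultaneously.

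I expect the only delicate point to be the inequality $|\lambda|\ge 1$: the bound $|\lambda|\le 1$ needs nothing but compactness, and the constant‑modulus conclusion needs only a dense forward orbit, but excluding $|\lambda|<1$ is exactly where one must know that the maximal value $M$ of $|f|$ is recovered after applying $T$, i.e.\ that $T(X)$ is dense in $X$; this holds automatically when $T$ is a homeomorphism, in particular for shift spaces and for invertible systems, which are the cases of interest here. The remaining computations are routine.
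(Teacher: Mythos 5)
Your proof is correct and reaches both conclusions, but by a genuinely different route from the paper's. The paper uses recurrence dynamically: the transitive point $x$ has a forward orbit returning arbitrarily close to $x$ infinitely often, so from $f(T^{n}x)=\lambda^{n}f(x)$ with $f(x)\neq 0$ the continuity of $f$ forces $\lambda^{n}f(x)\to f(x)$ along return times, hence $|\lambda|=1$ in one stroke; the constant-modulus claim is then obtained exactly as you obtain it, from $T$-invariance of $|f|$ together with density of the forward orbit. You instead split $|\lambda|=1$ into two inequalities: $|\lambda|\le 1$ from the maximum of $|f|$ on the compact space (no dynamics at all), and $|\lambda|\ge 1$ from density of $T(X)$ in $X$. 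This decomposition is arguably cleaner, since it isolates exactly which hypothesis each half uses. One caveat: the statement concerns an arbitrary recurrent system $(X,T)$, where $T$ is only assumed continuous, so ``$T$ is onto'' is not literally available from the hypotheses; to stay within them you should observe that $\{T^{n}x_{0}\mid n\ge 1\}\subseteq T(X)$ is already dense, because its closure can miss at most the single point $x_{0}$, and misses it only if $x_{0}$ is an isolated point to which its orbit never returns. That degenerate situation is precisely the one in which the paper's own ``infinitely many near-returns'' step (and indeed the statement itself, for such pathological recurrent systems) also breaks down, so your argument is valid in exactly the same generality as the paper's, and both are sound in the intended setting of shift spaces and invertible systems.
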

\begin{proof}
  Let $f$ be a continuous eigenfunction corresponding to $\lambda$.
  Since $(X,T)$ is recurrent, there is a point $x$
  such that the set of $T^n(x)$ for $n\ge 0$
  is dense in $X$. Thus for every $\varepsilon>0$ there
  is an infinity of $n$ such that $d(T^n(x),x)\le\varepsilon$. Since $f$
  is continuous, this forces $|\lambda|=1$. Next,
  since $|f(T(x))|=|\lambda||f(x)|=|f(x)|$ we obtain that
  $|f(x)|$ is constant since $f$ is continuous.
\end{proof}
\begin{example}
  Let $\sigma\colon a\mapsto ab, b\mapsto ba$ be the Thue-Morse
  morphism. The map $f\colon X(\sigma)\to \{-1,1\}$ defined by
  $f(x)=(-1)^i$ if $x$ has a $\sigma$-representation $(y,i)$
  with $y\in X(\sigma)$ is a continuous map such that
  $f(S(x))=-f(x)$. Thus $-1$ is a continuous eigenvalue of $(X(\sigma),S)$.
\end{example}
The following statement also appears as \cite[Proposition 2.1]{BertheCecchiYassawi2022}.
\begin{proposition}\label{propositionSpectra12}
  For every recurrent shift space $X$, 
the continuous spectrum of $X$ and $X^+$ are equal.
\end{proposition}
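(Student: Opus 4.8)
The plan is to exploit the natural factor map $\pi\colon X\to X^+$, $\pi(x)=x^+$, and show it induces a bijection between the two continuous spectra. First I would record the elementary properties of $\pi$: it is continuous (indeed $1$-Lipschitz, since $r(x,y)\le r(\pi(x),\pi(y))$ for the two metrics), it is surjective by the very definition of $X^+$, and it commutes with the shift because $\pi(S(x))=S(\pi(x))$. Hence $\pi$ is a morphism of dynamical systems from $(X,S)$ onto $(X^+,S)$; in particular $X^+$ is recurrent whenever $X$ is, as the image of a dense forward orbit under a continuous surjection is again a dense forward orbit. From this the inclusion ``continuous spectrum of $X^+$ $\subseteq$ continuous spectrum of $X$'' is immediate: given a continuous eigenfunction $h\colon X^+\to{\mathbb C}$ for $\lambda$ (so $h\neq 0$ and $h\circ S=\lambda h$), the function $f=h\circ\pi$ is continuous, nonzero because $\pi$ is onto, and satisfies $f\circ S=\lambda f$.

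For the reverse inclusion, take a continuous eigenfunction $f\colon X\to{\mathbb C}$ for $\lambda$. By the proposition above asserting that in a recurrent system every continuous eigenvalue has modulus $1$ and every continuous eigenfunction has constant modulus, we may normalize so that $|f|\equiv 1$. The key step is to show that $f$ \emph{factors through} $\pi$, that is, $x^+=y^+$ implies $f(x)=f(y)$. To see this, note that if $x^+=y^+$ then $S^n(x)$ and $S^n(y)$ agree on all coordinates of index $\ge -n$, so $d(S^n x,S^n y)\le 2^{-n}\to 0$; by continuity of $f$ we get $f(S^n x)-f(S^n y)\to 0$. But $f(S^n x)-f(S^n y)=\lambda^n\bigl(f(x)-f(y)\bigr)$, which has constant modulus $|f(x)-f(y)|$ because $|\lambda|=1$; therefore $f(x)=f(y)$. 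I expect this short asymptotic argument to be the heart of the matter, and it is precisely where recurrence is used (through $|\lambda|=1$).

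It then remains to package the conclusion. There is a well-defined map $g\colon X^+\to{\mathbb C}$ with $f=g\circ\pi$. Since $\pi$ is a continuous surjection from a compact space onto a Hausdorff space, it is closed, hence a quotient map, so $g$ is continuous because $g\circ\pi=f$ is continuous. Clearly $g\neq 0$. Finally, for $z\in X^+$ pick $x\in X$ with $x^+=z$; then $g(S z)=g\bigl((S x)^+\bigr)=f(S x)=\lambda f(x)=\lambda g(z)$, so $g\circ S=\lambda g$ and $\lambda$ is a continuous eigenvalue of $X^+$. Combining the two inclusions yields the equality of the continuous spectra of $X$ and $X^+$.
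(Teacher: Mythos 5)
Your proof is correct and follows essentially the same route as the paper: the easy direction by composing with $x\mapsto x^+$, and the reverse direction by showing that $x^+=y^+$ forces $f(x)=f(y)$, using $d(S^nx,S^ny)\to 0$ together with $|\lambda|=1$ (from recurrence) so that the constant-modulus difference $\lambda^n(f(x)-f(y))$ must vanish. Your quotient-map argument for the continuity of the induced function on $X^+$ is a welcome explicit justification of a step the paper dismisses as clear.
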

\begin{proof}
  Suppose first that $f$ is a continuous eigenfunction of $X^+$
  for the eigenvalue $\lambda$. Then the map $g\colon X\to \CC$
  defined by $g(x)=f(x^+)$ is a continuous eigenfunction of $X$
  for the same eigenvalue.

  Conversely, let $g$ be a continuous eigenfunction of $X$ for the
  eigenvalue $\lambda$. If $x,x'\in X$ are right-asymptotic, then
  $g(x)=g(x')$. Indeed, since $g$ is continuous, for every $\varepsilon>0$
  there is an $N\ge 1$ such that $y_{[-N,N]}=y'_{[-N,N]}$
  implies $|g(y)-g(y')|<\varepsilon$.
  If $S^n(x)^+=S^n(x')^+$, we have 
  $S^m(x)_{[-N,N]}=S^m(x')_{[-N,N]}$ for $m\ge n+N$. 
      This implies that $|g(S^m(x))-g(S^m(x'))|<\varepsilon$,
      whence $g(S^m(x))=g(S^m(x'))$ and thus that
      $g(x)=g(x')$. We can then define $f\colon X^+\to \CC$
      by $f(y)=g(x)$ for some $x$ such that $x^+=y$.
      The map $f$ is clearly an eigenfunction of $X^+$
      for the eigenvalue $\lambda$.
\end{proof}

\begin{example}
  Consider again the morphism $\sigma\colon a\mapsto ba,b\mapsto aa$
  of Example~\ref{exampleContrEx}. As for the Thue-Morse
  morphism, $-1$ is an eigenvalue of $X(\sigma)$
  and a corresponding eigenfunction 
  is $f(x)=(-1)^i$ if $x$ has a $\sigma$-representation
  $(y,i)$ with $y\in X(\sigma)$ and $0\le i<2$.
  Note that, since $\sigma$ is weakly one-sided recognizable,
  the restriction of $f$ to $X(\sigma)^+$ is also
  an eigenfunction.
  \end{example}

\bibliographystyle{plain}
\bibliography{oneSided}
\end{document}